\documentclass[11pt,reqno]{amsart}

%%%%%%%%%%%%%%%%%%%%%
\date{November 19, 2019}
%%%%%%%%%%%%%%%%%%%%%

\usepackage[parfill]{parskip}    % Activate to begin paragraphs with an empty line rather than an indent
\usepackage{graphicx}
\usepackage{amssymb}
\usepackage{amsmath}
\usepackage{amsthm}
\usepackage{mathabx}
\usepackage{commath}
\usepackage{epstopdf}

\usepackage{amsmath,amsfonts,amsthm,amssymb,amsxtra,dsfont,mathrsfs}
\usepackage{hyperref} 
\usepackage{colordvi}
\usepackage[usenames,dvipsnames]{color}

\usepackage[usenames]{xcolor}
\usepackage{url}

%%%%%%%%
%\usepackage[notref,notcite]{showkeys}
%%%%%%%%%

\usepackage[T1]{fontenc}
\usepackage[utf8]{inputenc}
%\usepackage{authblk}

%\DeclareGraphicsRule{.tif}{png}{.png}{`convert #1 `dirname #1`/`basename #1 .tif`.png}

\usepackage[ngerman, german, english]{babel} 
\usepackage{bbm}

%%%%%%%%
%\usepackage[notref,notcite]{showkeys}
%%%%%%%%%

\setlength{\voffset}{-.7truein}
\setlength{\textheight}{8.8truein}
\setlength{\textwidth}{6.5truein}
\setlength{\hoffset}{-.7truein}

\linespread{1.1} \numberwithin{equation}{section}

\newtheorem{thm}{Theorem}[section]
\newtheorem{cor}[thm]{Corollary}
\newtheorem{lem}[thm]{Lemma}
\newtheorem{prop}[thm]{Proposition}

\theoremstyle{remark}

\theoremstyle{definition}

\newtheorem{assumption}[thm]{Assumption}

% einige Abkuerzungen
 % komplexe
 % komplexe
\newcommand{\R}{\mathbb{R}} % reelle
 % rationale
 % ganze
 % natuerliche
 % natuerliche
 % minus Laplace
 % d in dx im Integral

\newcommand{\eps}{\epsilon}

\title[Energy asymptotics in the Brezis--Nirenberg problem. The higher-dimensional case]{Energy asymptotics in the Brezis--Nirenberg problem. \linebreak The higher-dimensional case}

\author{Rupert L. Frank}

\address[Rupert L. Frank]{Mathematisches Institut, Ludwig-Maximilians-Universit\"at M\"unchen, Theresienstr. 39, 80333 M\"unchen, Germany, and Mathematics 253-37, Caltech, Pasa\-de\-na, CA 91125, USA}

\email{r.frank@lmu.de, rlfrank@caltech.edu}

\author{Tobias König}

\address[Tobias König]{Mathematisches Institut, Ludwig-Maximilians-Universit\"at M\"unchen, Theresienstr. 39, 80333 M\"unchen, Germany}

\email{tkoenig@math.lmu.de}

\author {Hynek Kova\v{r}\'{\i}k}

\address [Hynek Kova\v{r}\'{\i}k]{DICATAM, Sezione di Matematica, Universit\`a degli studi di Brescia, Via Branze 38-  25123 Brescia, Italy}

\email {hynek.kovarik@unibs.it}

\thanks{\copyright\, 2019 by the authors. This paper may be reproduced, in its entirety, for non-commercial purposes.\\
Partial support through US National Science Foundation grant DMS-1363432 (R.L.F.) and Studienstiftung des deutschen Volkes (T.K.) is acknowledged. H.~K.  has been partially supported by Gruppo Nazionale per Analisi Matematica, la Probabilit\`a e le loro Applicazioni (GNAMPA) of the Istituto Nazionale di Alta Matematica (INdAM)}

\begin{document}

\begin{abstract}
For dimensions $N \geq 4$, we consider the Br\'ezis-Nirenberg variational problem of finding 
\[ S(\eps V) := \inf_{0\not\equiv u\in H^1_0(\Omega)} \frac{\int_\Omega |\nabla u|^2 \, dx +\eps \int_\Omega V\, |u|^2 \, dx}{\left(\int_\Omega |u|^q \, dx \right)^{2/q}}, \]
where $q=\frac{2N}{N-2}$ is the critical Sobolev exponent, $\Omega \subset \R^N$ is a bounded open set and $V:\overline{\Omega}\to \R$ is a continuous function. We compute the asymptotics of $S(0) - S(\eps V)$ to leading order as $\eps \to 0+$. We give a precise description of the blow-up profile of (almost) minimizing sequences and, in particular, we characterize the concentration points as being extrema of a quotient involving the Robin function. This complements the results from our recent paper in the case $N = 3$.
\end{abstract}

\maketitle

\section{\bf Introduction and main results}

\subsection{Setting of the problem}
Let $N \geq 4$ and let $\Omega \subset \R^N$ be a bounded open set. For $\eps > 0$ and a function $V \in C(\overline{\Omega})$, Br\'ezis and Nirenberg study in their famous paper \cite{BrNi} the quotient functional
\begin{equation} 
\label{var-prob}
\mathcal S_{\eps V}[u] := \frac{\int_\Omega |\nabla u|^2 \, dx +\eps \int_\Omega V\, |u|^2 \, dx}{\left(\int_\Omega |u|^q \, dx \right)^{2/q}},  \qquad q=\frac{2N}{N-2} \, ,
\end{equation}
and the corresponding variational problem of finding
\begin{equation}
\label{var-prob-inf}
S(\eps V) := \inf_{0\not\equiv u\in H^1_0(\Omega)} \mathcal S_{\eps V}[u] \,.
\end{equation}
This number is to be compared with
$$
 S_N = \pi N (N-2) \left(\frac{\Gamma(N/2)}{\Gamma(N)} \right)^{2/n}\, ,$$
the sharp constant \cite{Rod,Ro,Au,Ta} in the Sobolev inequality. Indeed, in \cite{BrNi} it is shown that $S(\eps V) < S_N$ as soon as  
\begin{equation} \label{N-def}
\mathcal N(V):= \{x\in \Omega: V(x) < 0\}
\end{equation}
is non-empty. This behavior is in stark contrast to the case $N = 3$ also treated in \cite{BrNi}, where there is an $\epsilon_V>0$ such that $S(\epsilon V) = S_N$ for all $\epsilon \in (0, \eps_V]$ even if $\mathcal N (V)$ is non-empty.

The purpose of this paper is, for $N \geq 4$, to describe the asymptotics of $S_N- S(\epsilon V)$ to leading order as $\epsilon \to 0$, as well as the asymptotic behavior of corresponding (almost) minimizing sequences and, in particular, their concentration behavior. This is the higher-dimensional complement to our recent paper \cite{FrKoKo}, where analogous results are shown in the more difficult case $N = 3$.

\textbf{Notation.   }
To prepare the statement of our main results, we now introduce some key objects for the following analysis. An important role is played by the Green's function of the Dirichlet Laplacian on $\Omega$, which in the sense of distributions satisfies, in the normalization of \cite{rey2},
\begin{equation} \label{G-pde}
\left\{
\begin{array}{l@{\quad}l}
-\Delta_x\, G(x,y) = (N-2)\, \omega_N\, \delta_y & \quad \text{in} \ \ \Omega , \\
& \\
G(x,y) = 0  & \quad \text{on} \ \ \partial\Omega,
\end{array}
\right.  
\end{equation}
where $\omega_N$ is the surface of the unit sphere in $\R^N$, and $\delta_y$ denotes the Dirac delta function centered at $y$. We denote by 
\begin{equation} \label{h-function}
H(x,y) = \frac{1}{|x-y|^{N-2}} - G(x,y)
\end{equation}
the regular part of $G$. The function $H(x, \cdot)$, defined on $\Omega \setminus \{x\}$, extends to a continuous function on $\Omega$ and we may define the \emph{Robin function} 
\begin{equation}
\label{phi-function}
\phi(x) := H(x,x) \,. 
\end{equation}
Using this function, we define the numbers
\begin{align*}
 \sigma_N(\Omega, V) & := \sup_{x\in\mathcal N(V)} \left( \phi(x)^{-\frac{2}{N-4}}\ |V(x)|^{\frac{N-2}{N-4}} \right), & N \geq 5  \, ,\\
 \sigma_4(\Omega, V) & := \sup_{x\in\mathcal N(V)} \left( \phi(x)^{-1} |V(x)|\right) ,  & N = 4 \,, 
\end{align*}
which will turn out to essentially be the coefficients of the leading order term in  $S_N- S(\epsilon V)$.

Another central role is played by the family of functions 
\begin{equation} \label{u-function}
U_{x,\lambda} (y) = \frac{\lambda^{(N-2)/2}}{(1+\lambda^2 |x-y|^2 )^{(N-2)/2}}\, \quad x\in \R^N, \, \lambda > 0.   
\end{equation}
It is well-known that the $U_{x, \lambda}$ are exactly the optimizers of the Sobolev inequality on $\R^N$.

Since \eqref{var-prob} is a perturbation of the Sobolev quotient, it is reasonable to expect the $U_{x,\lambda}$ to be nearly optimal functions for \eqref{var-prob-inf}. However, since \eqref{var-prob-inf} is set on $H^1_0(\Omega)$, we consider, as in \cite{BaCo}, the functions $PU_{x, \lambda} \in H^1_0(\Omega)$ uniquely determined by the properties 
\begin{equation} \label{eq-pu}
\Delta PU_{x,\lambda} = \Delta U_{x,\lambda}\ \ \  \text{ in } \Omega, \qquad PU_{x,\lambda} = 0 \ \ \ \text{ on } \partial \Omega \,.
\end{equation}
 
Moreover, let
$$
T_{x, \lambda} := \text{ span}\, \big\{ PU_{x, \lambda}, \partial_\lambda PU_{x, \lambda}, \partial_{x_i} PU_{x, \lambda}\,  (i=1,2,\ldots, N) \big\}
$$
and let $T_{x, \lambda}^\perp$ be the orthogonal complement of $T_{x,\lambda}$ in $H^1_0(\Omega)$ with respect to the inner product $\int_\Omega \nabla u \cdot \nabla v\,dy$.  

In what follows we denote by $\|\cdot\|$ the $L^2-$norm on $\Omega$. 
Finally, given a set $X$ and two functions $f_1,\, f_2: X\to\R$, we write $f_1 \lesssim f_2$ if there exists a numerical constant $c$ such that $f_1(x) \leq c\, f_2(x)$ for all $x\in X$. 

%%%%%%%%%%%%%%%

\subsection{Main results}

Throughout this paper and without further mention we assume that the following properties are satisfied.

\begin{assumption}
The set $\Omega \subset \R^N$, $N \geq 4$, is open and bounded and has a $C^2$ boundary. Moreover, $V \in C(\overline{\Omega})$ and $\mathcal N(V) \neq \emptyset$, with $\mathcal N(V)$ given by \eqref{N-def}.
\end{assumption}

Here is our first main result. It gives the asymptotics of $S_N - S(\eps V)$ to leading order in $\epsilon$. 

\begin{thm}
\label{thm expansion}
As $\eps\to 0+$, we have 
\begin{equation} \label{eq-thm ngeq5}
S(\eps V) = S_N -  C_N\,  \sigma_N(\Omega, V)\ \eps^{\frac{N-2}{N-4}} + o(\eps^{\frac{N-2}{N-4}}) \qquad\qquad\ \text{\rm if} \ N \geq 5 
\end{equation}
and 
\begin{equation} \label{eq-thm n4}
S(\eps V) =  S_4 -  \exp\Big( - \frac 4\epsilon \left(1 +o(1)\right) \sigma_4(\Omega, V)^{-1}  \Big) \qquad  \qquad \text{\rm if} \ N = 4.
\end{equation}
Here the constants $C_N$ are defined in \eqref{cn} below. 
\end{thm}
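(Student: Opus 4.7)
The plan is to prove matching upper and lower bounds for $S(\eps V)$.

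\emph{Upper bound.} The natural test functions are the projected Aubin--Talenti bubbles $PU_{x,\lambda}$ from \eqref{eq-pu}, with $x\in\mathcal{N}(V)$ fixed and $\lambda$ large. Standard Rey/Bahri--Coron type expansions give, as $\lambda\to\infty$,
\begin{align*}
\int_\Omega |\nabla PU_{x,\lambda}|^2 \, dx &= \int_{\R^N}|\nabla U_{x,\lambda}|^2\,dx - c_1 \phi(x) \lambda^{-(N-2)} + o(\lambda^{-(N-2)}), \\
\int_\Omega |PU_{x,\lambda}|^q \, dx &= \int_{\R^N}|U_{x,\lambda}|^q\,dx - c_2 \phi(x) \lambda^{-(N-2)} + o(\lambda^{-(N-2)}),
\end{align*}
and
\begin{equation*}
\int_\Omega V\,|PU_{x,\lambda}|^2 \, dx = \begin{cases} c_3 V(x)\, \lambda^{-2} + o(\lambda^{-2}), & N\geq 5, \\ c_3 V(x)\, \lambda^{-2}\log\lambda + o(\lambda^{-2}\log\lambda), & N=4. \end{cases}
\end{equation*}
Expanding the quotient \eqref{var-prob} then yields, for suitable positive constants $A,B$,
\begin{equation*}
\mathcal S_{\eps V}[PU_{x,\lambda}] = S_N + A\phi(x)\lambda^{-(N-2)} - B\eps|V(x)|\lambda^{-2} + \text{l.o.t.} \qquad (N\geq 5),
\end{equation*}
with the middle term replaced by $-B\eps|V(x)|\lambda^{-2}\log\lambda$ in dimension $N=4$. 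Minimizing in $\lambda$ at fixed $x$ and then in $x\in\mathcal{N}(V)$ gives, for $N\geq 5$, the optimal scale $\lambda^{N-4}\sim\phi(x)/(\eps|V(x)|)$ and the contribution $-C_N\,\sigma_N(\Omega,V)\,\eps^{(N-2)/(N-4)}$; for $N=4$ one finds $\log\lambda\sim\phi(x)/(\eps|V(x)|)$ and hence the exponentially small quantity in \eqref{eq-thm n4}.

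\emph{Lower bound.} Let $(u_\eps)\subset H^1_0(\Omega)$ be an almost-minimizing sequence normalized by $\|u_\eps\|_q=1$. Since $S(\eps V)\to S_N$ and $S_N$ is unattained on bounded domains, concentration-compactness forces $u_\eps$ to blow up at a single point $x_0\in\overline{\Omega}$. Using the Bahri--Coron decomposition
\begin{equation*}
u_\eps = \alpha_\eps PU_{x_\eps,\lambda_\eps} + w_\eps, \qquad w_\eps\in T_{x_\eps,\lambda_\eps}^\perp,
\end{equation*}
one shows $\alpha_\eps\to 1$, $\lambda_\eps\to\infty$, $x_\eps\to x_0$, and that $\|\nabla w_\eps\|$ is small compared with the leading correction terms. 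A Bianchi--Egnell-type coercivity on $T_{x_\eps,\lambda_\eps}^\perp$ implies that the remainder $w_\eps$ can only raise $\mathcal S_{\eps V}[u_\eps]$. The problem thereby reduces to the explicit expansion on $PU_{x_\eps,\lambda_\eps}$ from the upper bound, and optimization over $(x_\eps,\lambda_\eps)$ yields the matching lower bound. The condition $S(\eps V)<S_N$ forces $V(x_0)\le 0$, while continuity of $V$ and of the Robin function $\phi$ shows that $x_0$ must attain the supremum defining $\sigma_N(\Omega,V)$.

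\emph{Main obstacle.} The delicate step is the lower bound, where one must juggle the three small parameters $\eps$, $\lambda_\eps^{-1}$, $\|\nabla w_\eps\|$ and control cross terms such as $\int V\,PU_{x_\eps,\lambda_\eps}\,w_\eps\,dx$, $\int V\,|w_\eps|^2\,dx$, together with the subleading part of $\|u_\eps\|_q^q$, uniformly in $\eps$. The four-dimensional case is the most subtle: since the Robin correction and the $\eps V$-perturbation both act at scale $\lambda^{-2}$ (up to a logarithm), an additional level of logarithmic precision in the expansion is required to isolate the exponentially small quantity in \eqref{eq-thm n4}.
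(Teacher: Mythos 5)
Your outline follows the paper's proof very closely: upper bound from the Rey-type expansion of $\mathcal S_{\eps V}[PU_{x,\lambda}]$ optimized in $\lambda$ and $x$; lower bound from the decomposition of an almost-minimizer as $\alpha(PU_{x,\lambda}+w)$ with $w\in T_{x,\lambda}^\perp$, Rey's coercivity of the second variation on $T_{x,\lambda}^\perp$ (what you call a Bianchi--Egnell-type bound) to discard $w$, and then optimization of the remaining $PU$-expansion. The ``main obstacle'' you identify (juggling $\eps$, $\lambda^{-1}$, $\|\nabla w\|$ and the cross terms, extra logarithmic care when $N=4$) is exactly the content of the paper's Propositions \ref{prop-lowerbd} and \ref{prop-rey}, so the structure matches.

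One step, however, is asserted too quickly and conceals a real issue. You write that ``continuity of $V$ and of the Robin function $\phi$ shows that $x_0$ must attain the supremum defining $\sigma_N(\Omega,V)$,'' and that $S(\eps V)<S_N$ gives $V(x_0)\le 0$. This does not rule out $x_\eps\to\partial\Omega$: the Robin function is \emph{not} continuous up to $\partial\Omega$ (it blows up like $d(x)^{2-N}$), so concentration at the boundary, where $\phi^{-2/(N-4)}|V|^{(N-2)/(N-4)}$ tends to $0$, cannot be excluded by a continuity argument alone. Excluding boundary concentration is a quantitative step: one must combine the lower bound on the remainder $A(d\lambda)^{2-N}-B\eps(d\lambda)^{-2}\gtrsim -A^{-2/(N-4)}B^{(N-2)/(N-4)}\eps^{(N-2)/(N-4)}$ (with $A\simeq\phi(x)d^{N-2}$ bounded below by \eqref{phi near bdry} and $B\simeq|V(x)|d^2$) with the \emph{upper} bound $S_N-S(\eps V)\gtrsim\eps^{(N-2)/(N-4)}$ from Corollary \ref{cor-upperb}; this forces $B$ to be bounded away from zero, hence $d$ bounded away from zero and $V(x_0)<0$ (strictly). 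This is Lemma \ref{lemma bdry conc} in the paper; for $N=4$ the same argument requires an extra layer of logarithmic bookkeeping. Without this step the lower-bound optimization over $x$ is not justified, so this is a genuine gap in the sketch as written, even though your overall strategy is the correct one.
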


Our second main result shows that the blow-up profile of an arbitrary almost minimizing sequence $(u_\epsilon)$ is given to leading order by the family of functions $PU_{x, \lambda}$. Moreover, we give a precise characterization of the blow-up speed $\lambda = \lambda_\eps$ and of the point $x_0$ around which the $u_\eps$ concentrate. 

\begin{thm} 
\label{thm-minimizers}
Let $(u_\epsilon)\subset H^1_0(\Omega)$ be a family of functions such that
\begin{equation} \label{appr-min}
\lim_{\epsilon\to 0} \frac{\mathcal S_{\epsilon V}[u_\epsilon] - S(\epsilon V)}{S_N-S(\epsilon V)}  = 0 \qquad \text{and} \qquad \int_\Omega |u_\epsilon|^q \,dx = \left( \frac {S_N}{N(N-2)}\right)^{\frac{q}{q-2}} \,.
\end{equation}
Then there are $(x_\epsilon)\subset\Omega$, $(\lambda_\epsilon)\subset(0,\infty)$, $(\alpha_\epsilon)\subset\R$ and $(w_\eps) \subset H^1_0(\Omega)$ with $w_\eps \in T_{x_\eps, \lambda_\eps}^\perp$ such that
\begin{equation} \label{u-eps-final}
u_\epsilon =  \alpha_\epsilon \left( PU_{x_\epsilon, \lambda_\epsilon} + w_\eps \right)   
\end{equation}
and, along a subsequence, $x_\epsilon  \to x_0$ for some $x_0\in \mathcal N(V)$. Moreover, 
\begin{align*}
&  \begin{cases}  
  \phi(x_0)^{-\frac{2}{N-4}}\ |V(x_0)|^{\frac{N-2}{N-4}} = \sigma_N(\Omega, V) , &  N \geq 5,  \\
  \phi(x_0)^{-1}|V(x_0)|   = \sigma_4(\Omega, V) \,,  &  N = 4,
  \end{cases} \\
  & \begin{cases}
\|\nabla w_\eps\|=o(\epsilon^\frac{N-2}{2N-8}),  & N \geq 5, \\
\|\nabla w_\eps\|\leq  \exp\Big( - \frac 2\epsilon \left(1 +o(1)\right) \sigma_4(\Omega, V)^{-1} \Big) , & N = 4, 
\end{cases} \\
& \begin{cases}
 \lim_{\epsilon \to 0}\,  \epsilon \,  \lambda_\epsilon^{N-4}  = \frac{N\, (N-2)^2\, a_N \,\phi(x_0)}{2 \,b_N\,  |V(x_0)|} \,, & N \geq 5, \\
 \lim_{\epsilon \to 0}\,  \epsilon \, \ln \lambda_\eps =  \frac{2\, \phi(x_0)}{  |V(x_0)|}\, , & N = 4,
 \end{cases}  \\
& \begin{cases} 
\alpha_\epsilon = s \left( 1 + D_N \epsilon^\frac{N-2}{N-4} + o(\epsilon^\frac{N-2}{N-4}) \right), & N \geq 5,  \\
\alpha_\eps = s \left(1 + \exp\Big( - \frac 4\epsilon \left(1 +o(1)\right) \sigma_4(\Omega, V)^{-1}  \Big) \right) , & N=4,
\end{cases} 
\qquad\text{for some}\ s\in\{\pm 1\} \,. 
\end{align*}
Here the constants $a_N$, $b_N$ and $D_N$ are defined in \eqref{anbn} and \eqref{dn} below. 
\end{thm}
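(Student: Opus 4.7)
\emph{Strategy.} The plan is to combine Struwe's bubble decomposition with the refined expansions of the Sobolev quotient along the $PU_{x,\lambda}$-manifold already used to establish Theorem~\ref{thm expansion}. First, I would produce the decomposition \eqref{u-eps-final}: since $\mathcal S_{\eps V}[u_\eps]\to S_N$ and $S_N$ is not attained on a bounded domain, the $L^q$-normalization in \eqref{appr-min} makes $(u_\eps)$ a bounded sequence of Palais--Smale type for the Sobolev quotient at level $S_N$. Struwe's profile decomposition then yields, along a subsequence, $s\in\{\pm 1\}$, points $\tilde x_\eps\in\Omega$, scales $\tilde\lambda_\eps\to\infty$ and $r_\eps\to 0$ in $H^1_0(\Omega)$ with $u_\eps = s(PU_{\tilde x_\eps,\tilde\lambda_\eps}+r_\eps)$. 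I would then re-parameterize by minimizing $\|\nabla(u_\eps-\alpha PU_{x,\lambda})\|^2$ over $(\alpha,x,\lambda)$ near $(s,\tilde x_\eps,\tilde\lambda_\eps)$; the first-order conditions at the minimum are precisely the orthogonality $w_\eps\in T^\perp_{x_\eps,\lambda_\eps}$, giving \eqref{u-eps-final} with $\|\nabla w_\eps\|\to 0$, $|\alpha_\eps|\to 1$, $\lambda_\eps\to\infty$ and (up to a further subsequence) $x_\eps\to x_0\in\overline\Omega$.

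\emph{Expansion of the functional.} Standard Bahri--Coron/Rey expansions give, for $x$ in a fixed compact subset of $\Omega$,
\[ \|\nabla PU_{x,\lambda}\|^2 = \|\nabla U_{x,\lambda}\|^2_{L^2(\R^N)} - a_N\,\phi(x)\,\lambda^{-(N-2)} + o(\lambda^{-(N-2)}), \]
\[ \|PU_{x,\lambda}\|_{L^q}^q = \|U_{x,\lambda}\|^q_{L^q(\R^N)} - q\, a_N\,\phi(x)\,\lambda^{-(N-2)} + o(\lambda^{-(N-2)}), \]
and $\int_\Omega V|PU_{x,\lambda}|^2\,dx = b_N\, V(x)\lambda^{-2}(1+o(1))$ for $N\geq 5$ (with an additional logarithmic factor in dimension four). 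Inserting \eqref{u-eps-final} and using $w_\eps\in T^\perp_{x_\eps,\lambda_\eps}$ to eliminate the linear-in-$w$ contributions, together with a Bianchi--Egnell/Rey-type coercivity estimate for the second variation of the Sobolev quotient on $T^\perp$, one obtains
\[ \mathcal S_{\eps V}[u_\eps] - S_N \geq -a_N\,\phi(x_\eps)\lambda_\eps^{-(N-2)} + \eps\,b_N V(x_\eps)\lambda_\eps^{-2} + c\|\nabla w_\eps\|^2 + \text{l.o.t.}, \]
for some $c>0$ independent of $\eps$, with an analogous expression for $N=4$.

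\emph{Matching.} By \eqref{appr-min} and Theorem~\ref{thm expansion}, the left-hand side equals $-C_N\sigma_N(\Omega,V)\eps^{(N-2)/(N-4)}+o(\eps^{(N-2)/(N-4)})$. Minimizing the $\lambda$-dependent part of the right-hand side pins down $\eps\lambda_\eps^{N-4}$ in terms of $\phi(x_\eps)/|V(x_\eps)|$, giving the stated formula, while equating leading orders forces $x_\eps\to x_0\in\mathcal N(V)$ to be a maximizer of $\sigma_N(\Omega,V)$. The slack in the inequality then yields $\|\nabla w_\eps\|=o(\eps^{(N-2)/(2N-8)})$. Substituting back into the $L^q$-normalization in \eqref{appr-min} and expanding $\|PU_{x_\eps,\lambda_\eps}+w_\eps\|_{L^q}^q$ to the next order pins down $\alpha_\eps$ as claimed. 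In the case $N=4$ the same scheme applies, but with the logarithmic balance $\eps\ln\lambda_\eps\to 2\phi(x_0)/|V(x_0)|$ generating the stated exponential-type rates.

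\emph{Main obstacle.} The delicate part is the sharp coercivity estimate: one needs $Q[w_\eps]\gtrsim\|\nabla w_\eps\|^2$, and the cross terms from the Taylor expansion of $\int|PU+w|^q$ and from $\eps\int V(PU+w)^2$ must be strictly of higher order than $\|\nabla w_\eps\|^2$, with enough room to be absorbed into the coercive term. Because the critical Sobolev exponent makes the cubic-in-$w$ remainder in the $L^q$-expansion only barely subcritical, and because the candidate rate for $\|\nabla w_\eps\|$ is exactly the balance rate of the main expansion, the estimate must be made sharply and uniformly in $x_\eps$, $\lambda_\eps$. Dimension four adds the further difficulty of logarithmic integrals and an exponential concentration scale, which require particularly careful bookkeeping.
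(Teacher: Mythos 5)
Your proposal follows the paper's route essentially step by step: the Struwe/Rey-type decomposition $u_\eps = \alpha(PU_{x,\lambda}+w)$ with $w\in T_{x,\lambda}^\perp$ (the paper's Proposition \ref{prop-app-min}), the expansion of $\mathcal S_{\eps V}$ around $PU_{x,\lambda}$ with orthogonality killing the linear-in-$w$ terms (Proposition \ref{prop-lowerbd}), Rey's coercivity on $T^\perp$ (Proposition \ref{prop-rey}), and matching against the upper bound from Theorem \ref{thm expansion PU}/Corollary \ref{cor-upperb} to pin down $x_0$, $\lambda_\eps$, $\|\nabla w_\eps\|$ and finally $\alpha_\eps$ from the $L^q$ normalization. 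Two points are worth flagging.

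First, a sign slip in your quotient inequality: you correctly write $\|\nabla PU\|^2$ and $\|PU\|_{L^q}^q$ with \emph{negative} Robin corrections, but then the Taylor expansion of the \emph{quotient} produces a \emph{positive} Robin contribution (cf.\ \eqref{exp-quot-PU} and \eqref{lowerbd-cor}); your displayed bound has $-a_N\phi(x_\eps)\lambda_\eps^{2-N}$, which, since $V(x_\eps)<0$, would make the whole $\lambda$-dependent expression monotone and destroy the balance that produces $\eps\lambda_\eps^{N-4}\to\text{const}$. Since you clearly use the balance mechanism afterwards, this is presumably a typo, but as written the key inequality is wrong.

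Second, you state the expansions "for $x$ in a fixed compact subset of $\Omega$," but after the reparameterization all you know a priori is $x_\eps\to x_0\in\overline\Omega$; concentration at the boundary (where $\phi\sim d^{2-N}\to\infty$) must be excluded before those expansions can be used at the stated orders. The paper handles this by keeping the $d(x_\eps)$-dependence explicit in every remainder (the $(d\lambda)^{2-N}$ and $(d\lambda)^{4/3-N}$ error terms in Theorem \ref{thm expansion PU} and Corollary \ref{cor-lowerbd}) and then deducing from the lower bound that $d$ is bounded away from zero and $V(x_0)<0$ (Lemma \ref{lemma bdry conc}). Your sketch skips this step; it is not hard, but it is a genuine ingredient that needs to be supplied rather than assumed.
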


The coefficients appearing in Theorems \ref{thm expansion} and \ref{thm-minimizers} are
\begin{equation}
\label{anbn}
a_N := \int_{\R^N} \frac{ dz}{(1+z^2)^{(N+2)/2}}, \qquad b_N := \begin{cases} \int_{\R^N} \frac{ dz}{(1+z^2)^{N-2}}, & N \geq 5, \\
\omega_4, & N = 4, \end{cases}
\end{equation}
as well as 
\begin{equation} \label{cn}
C_N := S_N^{\frac{2-N}{2}}\, (N(N-2))^{\frac{N-2}{2}}\ \frac{N-4}{N-2} \, \left(\frac{N (N-2)^2}{2}\right)^{\frac{2}{4-N}}\, a_N^{-\frac{2}{N-4}}\, b_N^{\frac{N-2}{N-4}}, \qquad N \geq 5, 
\end{equation}
and 
\begin{equation}
\label{dn}
D_N :=   a_N ^{-\frac{2}{N-4}}  b_N ^{\frac{N-2}{N-4}} S_N^{-\frac{N}{2}} \left( N(N-2) \right)^{\frac{N}{2} - \frac{N-2}{N-4}} \left( \frac{N-2}{2} \right)^{-\frac{N-2}{N-4}}\,, \qquad N \geq 5. 
\end{equation}
A simple computation using beta functions yields the numerical values 
\[ a_N = \frac{\omega_N}{N}, \quad N \geq 4, \quad \quad \text{ and } \quad \quad  b_N =   \omega_N \, \frac{\Gamma\left(\frac N2\right)\, \Gamma\left(\frac N2-2\right)}{2\, \Gamma(N-2)}, \quad  N \geq 5. \]

%%%%%%

\subsection{Discussion}

Let us put our main results, Theorems \ref{thm expansion} and \ref{thm-minimizers}, into perspective with respect to existing results in the literature. 

Of course, minimizers of the variational problem \eqref{var-prob-inf} satisfy the corresponding Euler--Lagrange equation. It is natural to study general positive solutions of this equation, even if they do not arise as minimizers of \eqref{var-prob-inf}. In the special case where $V$ is a negative constant, Br\'ezis and Peletier \cite{BrPe} discussed the concentration behavior of such general solutions and made some conjectures, which were later proved by Han \cite{Ha} and Rey \cite{rey1}. Probably one can use their precise concentration results to give an alternative proof of our main results in the special case where $V$ is constant and probably one can even extend the analysis of Han and Rey to the case of non-constant $V$.

Our approach here is different and, we believe, simpler for the problem at hand. We work directly with the variational problem \eqref{var-prob-inf} and \emph{not} with the Euler--Lagrange equation. Therefore, our concentration results are not only true for minimizers but even for `almost minimizers' in the sense of \eqref{appr-min}. We believe that this is interesting in its own right. On the other hand, a disadvantage of our method compared to the Han--Rey method is that it gives concentration results only in $H^1$ norm and not in $L^\infty$ norm and that it is restricted to energy minimizing solutions of the Euler--Lagrange equation.

In the special case where $V$ is a negative constant, our results are very similar to results obtained by Takahashi \cite{Tak}, who combined elements from the Han--Rey analysis (see, e.g., \cite[Equation (2.4) and Lemma 2.6]{Tak}) with variational ideas adapted from Wei's treatment \cite{We} of a closely related problem; see also \cite{FlWe}. Takahashi obtains the energy asymptotics in Theorem \ref{thm expansion} as well as the characterization of the concentration point and the concentration scale in Theorem \ref{thm-minimizers} under the assumption that $u_\epsilon$ is a minimizer for \eqref{var-prob-inf}. Thus, in our paper we generalize Takahashi's results to non-constant $V$ and to almost minimizing sequences and we give an alternative, self-contained proof which does not rely on the works of Han and Rey.

The present work is a companion paper to \cite{FrKoKo} relying on the techniques developed there in the three dimensional case. In particular, Theorems \ref{thm expansion} and \ref{thm-minimizers} should be compared with \cite[Theorems 1.3 and 1.7]{FrKoKo}, respectively. Although the expansions for $N \geq 4$ have the same structure as in the case $N = 3$, the latter case is more involved. In fact, when $N = 3$, the coefficient of the leading order term, namely the term of order $\eps$, vanishes and one has to expand the energy to the next order, namely $\eps^2$.

Besides the extensions of known results that we achieve here, we also think it is worthwhile from a methodological point of view to present our arguments again in the conceptually easier case $N\geq 4$. In the three-dimensional case the basic technique is iterated twice, which to some extent obscures the underlying simple idea. Moreover, we hope our work sheds some new light on the similarities and discrepancies between the two cases. 

The structure of this paper is as follows. In Section \ref{sec upperbd} we prove the upper bound from Theorem \ref{thm expansion} by inserting the $PU_{x, \lambda}$ as test functions. The proof of the corresponding lower bound is prepared in Sections \ref{sec lowerbd pre} and \ref{sec lowerbd exp}, where we derive a crude asymptotic expansion for a general almost minimizing sequence $(u_\eps)$ and the corresponding expansion of $\mathcal S_{\eps V}[u_\eps]$. Section \ref{sec pfmain} contains the proof of Theorems \ref{thm expansion} and \ref{thm-minimizers}. A crucial ingredient there is the coercivity inequality \eqref{eq-rey} from \cite{rey2}, which allows us to estimate the remainder terms and to refine the aforementioned expansion of $u_\eps$. Finally, an appendix contains two auxiliary technical results. 

\section{\bf Upper bound}
\label{sec upperbd}

The computation of the upper bound to $S(\eps V)$ uses the functions $PU_{x, \lambda}$, with suitably chosen $x$ and $\lambda$, as test functions. The following theorem gives a precise expansion of the value $\mathcal S_{\eps V}[PU_{x, \lambda}]$. To state it, we introduce the distance to the boundary of $\Omega$ as 
$$
d(x) = \text{dist}(x,\partial\Omega), \qquad x\in\Omega.
$$

\begin{thm}
\label{thm expansion PU}
Let $x = x_\lambda$ be a sequence of points such that $d(x) \lambda \to \infty$. Then as $\lambda \to \infty$, we have
\begin{align}
 \int_\Omega  | \nabla PU_{x, \lambda}|^2 \, dy &= 
 N(N-2)  \left(\frac{S_N}{N(N-2)} \right) ^\frac{q}{q-2} + N(N-2) \, a_N\, \lambda^{2-N}\, \phi(x) + \mathcal O((d(x)\lambda)^{\frac{4}{3}-N})\, ,  \label{exp-NablaPU} \\
 \int_\Omega  V PU_{x, \lambda}^2 \, dy  &= \begin{cases} 
\lambda^{-2}\, b_N\, V(x) + \mathcal{O}\left((d(x) \lambda)^{2-N} \right) + o(\lambda^{-2})  , & N \geq 5, \label{exp-epsVPU}\\
\frac{\log \lambda}{\lambda^2}\ b_4\, V(x) + \mathcal{O}\left((d(x) \lambda)^{-2}\right) + o\left(\frac{\log \lambda}{\lambda^2}\right) & N = 4,
\end{cases}
\end{align}
and
\begin{equation}
\label{exp-PUq}
\int_\Omega | PU_{x, \lambda}|^q \, dy =\left( \frac{S_N}{N(N-2)} \right) ^\frac{q}{q-2} - q\, a_N\, \lambda^{2-N}\, \phi(x) + o ((d(x) \lambda)^{2-N}).
\end{equation}
\smallskip

\noindent In particular, as $\lambda \to \infty$, 
\begin{equation}
\label{exp-quot-PU}
\mathcal S_{\eps V}[PU_{x, \lambda}] = 
\begin{cases}
S_N \!+ \!\left( \!\frac{S_N}{N(N-2)}\!\right)^{\frac{2}{2-q}}\!\! \left( \!\frac{N(N-2)\, a_N \, \phi(x)}{\lambda^{N-2}} +b_N\, \eps\, \frac{V(x)}{\lambda^{2}} \!\right) +  o ((d(x)\lambda)^{2-N}) + o (\eps \lambda^{-2}) , & N \geq 5, \\
S_4 + \frac {8}{S_4} \left( \frac{8\, a_4 \, \phi(x)}{\lambda^{2}} +b_4\, \eps\, \frac{V(x)\, \log\lambda}{\lambda^{2}} \right) + o ((d(x)\lambda)^{-2}) + o (\eps \frac{\log \lambda}{\lambda^2}), & N = 4. 
\end{cases}
\end{equation}
\end{thm}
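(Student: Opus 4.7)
The plan is to follow the classical approach of Bahri--Coron and Rey: decompose $PU_{x,\lambda} = U_{x,\lambda} - \varphi_{x,\lambda}$, where $\varphi_{x,\lambda}$ is the harmonic function in $\Omega$ with boundary data $U_{x,\lambda}|_{\partial\Omega}$. Since $U_{x,\lambda}(y) \sim \lambda^{-(N-2)/2}|x-y|^{-(N-2)}$ whenever $\lambda|x-y|$ is large, one expects $\varphi_{x,\lambda}(y) \approx \lambda^{-(N-2)/2} H(x,y)$, which I would record as an auxiliary lemma (presumably one of the appendix results alluded to in the introduction) giving pointwise asymptotics together with a uniform upper bound of the form $\varphi_{x,\lambda}(y) \lesssim \lambda^{-(N-2)/2} H(x,y)$. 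With this in hand, each of \eqref{exp-NablaPU}--\eqref{exp-PUq} reduces to a `free' contribution from $U_{x,\lambda}$ plus a correction involving $\varphi_{x,\lambda}$, both of which are evaluated through the rescaling $z = \lambda(y-x)$.

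For \eqref{exp-NablaPU}, I would use $PU_{x,\lambda} \in H_0^1(\Omega)$ together with the Euler--Lagrange identity $-\Delta U_{x,\lambda} = N(N-2) U_{x,\lambda}^{q-1}$ to write
\[
\int_\Omega |\nabla PU_{x,\lambda}|^2 \, dy = N(N-2) \int_\Omega U_{x,\lambda}^q \, dy - N(N-2) \int_\Omega \varphi_{x,\lambda}\, U_{x,\lambda}^{q-1} \, dy.
\]
The first piece equals $N(N-2)(S_N/(N(N-2)))^{q/(q-2)}$ modulo the tail $\int_{\R^N\setminus\Omega} U_{x,\lambda}^q\,dy = \mathcal{O}((d(x)\lambda)^{-N})$. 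In the second, after inserting the asymptotics of $\varphi_{x,\lambda}$, using continuity of $H(x,\cdot)$ at $x$, and $\int_{\R^N} U_{x,\lambda}^{q-1}\,dy = a_N \lambda^{-(N-2)/2}$, one obtains the correction of order $\lambda^{2-N}\phi(x)$. The expansion \eqref{exp-PUq} then follows by Taylor expanding $(U-\varphi)^q = U^q - q U^{q-1}\varphi + \mathcal{O}(U^{q-2}\varphi^2)$ and handling each term by the same recipe. For \eqref{exp-epsVPU}, I would expand $PU_{x,\lambda}^2 = U_{x,\lambda}^2 - 2 U_{x,\lambda}\varphi_{x,\lambda} + \varphi_{x,\lambda}^2$ and exploit uniform continuity of $V$ together with concentration of $U_{x,\lambda}^2$ at $x$ to replace $V(y)$ by $V(x)$. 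After rescaling, the leading integral $\int_\Omega U_{x,\lambda}^2\,dy$ equals $b_N \lambda^{-2}(1 + o(1))$ for $N \geq 5$; in the borderline case $N = 4$ the integrand $(1+|z|^2)^{-2}$ has logarithmically divergent integral in $\R^4$, and $\int_0^{\lambda d(x)} s^3/(1+s^2)^2\,ds \sim \log(\lambda d(x)) = \log \lambda + \mathcal{O}(1)$ produces both the logarithmic factor and the value $b_4 = \omega_4$.

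Finally, to obtain \eqref{exp-quot-PU}, I would plug the three expansions into the Rayleigh quotient and Taylor expand the denominator via $\bigl(\int PU_{x,\lambda}^q\bigr)^{2/q} \approx C_0^{2/q}\bigl(1 - 2 a_N \phi(x) \lambda^{2-N}/C_0\bigr)$, where $C_0 = (S_N/(N(N-2)))^{q/(q-2)}$; collecting the $\lambda^{2-N}$ contributions from the numerator and denominator yields exactly the coefficient $N(N-2) a_N \phi(x)/\lambda^{N-2}$ times $(S_N/(N(N-2)))^{2/(2-q)}$, with the $\eps V$-term carried along unchanged to leading order. The main technical obstacle is the simultaneous bookkeeping of the small errors---the $(d(x)\lambda)^{-N}$ tail from truncating $U_{x,\lambda}$ to $\Omega$, the pointwise remainder in the expansion of $\varphi_{x,\lambda}$, and the modulus of continuity of $V$---so as to produce the relatively delicate error $\mathcal{O}((d(x)\lambda)^{4/3-N})$ in \eqref{exp-NablaPU}, which presumably arises from interpolating between an $L^\infty$ and an integrated estimate for $\varphi_{x,\lambda}$ on regions of $\Omega$ close to the boundary. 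The case $N = 4$ requires separate attention throughout, since $b_4$ is extracted as the principal part of a logarithmically divergent integral and the resulting error $o(\log\lambda/\lambda^2)$ must simultaneously dominate the boundary tail and the $V(y) - V(x)$ correction.
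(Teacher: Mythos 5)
Your proposal takes essentially the same route as the paper: the decomposition $PU_{x,\lambda}=U_{x,\lambda}-\varphi_{x,\lambda}$ with $\varphi_{x,\lambda}\approx\lambda^{-(N-2)/2}H(x,\cdot)$ from Rey's Proposition~1, the integration-by-parts identity $\int_\Omega|\nabla PU_{x,\lambda}|^2=N(N-2)\int_\Omega U_{x,\lambda}^{q-1}PU_{x,\lambda}$, a pointwise Taylor expansion of $t\mapsto t^q$ for \eqref{exp-PUq}, the expansion $PU^2=U^2-2U\varphi+\varphi^2$ with $V$ localized by uniform continuity for \eqref{exp-epsVPU}, and a final Taylor expansion of the Rayleigh quotient. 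One small correction: your attribution of the exponent $\tfrac43-N$ to an interpolation of estimates for $\varphi_{x,\lambda}$ near $\partial\Omega$ is not quite right. In the paper it comes from optimizing the radius $\rho$ of a ball $B_\rho(x)\subset\Omega$ when computing $\int_\Omega U_{x,\lambda}^{q-1}H(x,\cdot)\,dy$: inside $B_\rho(x)$ the Lipschitz bound $|\nabla_y H(x,\cdot)|\lesssim d(x)^{1-N}$ produces an error $\mathcal O(\rho\,d(x)^{1-N})$, while outside it the tail of $U_{x,\lambda}^{q-1}$ gives $\mathcal O(d(x)^{2-N}(\lambda\rho)^{-2})$; balancing yields $\rho=d(x)^{1/3}\lambda^{-2/3}$ and hence the error $(d(x)\lambda)^{4/3-N}$.
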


In view of Proposition \ref{prop-app-min} below, the assumption $d(x) \lambda \to \infty$ in Theorem \ref{thm expansion PU} is no restriction, even when dealing with general almost minimizing sequences.

\begin{cor}
\label{cor-upperb}
As $\eps\to 0+$, we have 
\begin{equation} \label{eq-upperb1}
S(\eps V)  \leq S_N -  C_N\,  \sigma_N(\Omega, V)\ \eps^{\frac{N-2}{N-4}} + o(\eps^{\frac{N-2}{N-4}}) \qquad\qquad\ \text{\rm if} \ N \geq 5 
\end{equation}
and 
\begin{equation} \label{eq-upperb2}
S(\eps V) \leq S_4 -  \exp\Big( - \frac 4\epsilon \left(1 +o(1)\right) \sigma_4(\Omega, V)^{-1} \Big) \qquad  \qquad \text{\rm if} \ N = 4.
\end{equation}
\end{cor}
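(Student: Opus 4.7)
The plan is to take $u=PU_{x,\lambda}$ as a trial function in \eqref{var-prob-inf}, apply the expansion \eqref{exp-quot-PU} from Theorem \ref{thm expansion PU}, and optimize in the two free parameters. I will choose $x$ along an approximately maximizing sequence in $\mathcal N(V)\cap\Omega$ for the quantity defining $\sigma_N(\Omega,V)$ (so in particular $d(x)$ stays bounded below, since $\phi(x)\to\infty$ as $x\to\partial\Omega$ forces any maximizer to lie at positive distance from $\partial\Omega$), and $\lambda=\lambda_*(x,\epsilon)\to\infty$ to minimize the leading rational expression in $\lambda$ produced by \eqref{exp-quot-PU}. Since $S(\epsilon V)\le \mathcal S_{\epsilon V}[PU_{x,\lambda_*}]$ this gives the upper bound, provided the error terms remain subleading at $\lambda=\lambda_*$.

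For $N\ge 5$ the quantity to minimize in $\lambda$ has the form $A\,\lambda^{2-N}+B\epsilon\,\lambda^{-2}$ with $A=N(N-2)\,a_N\phi(x)>0$ and $B=b_N V(x)<0$. A direct calculus computation gives the critical point $\lambda_*^{N-4}=(N-2)A/(2|B|\epsilon)$, so $\lambda_*\sim \epsilon^{-1/(N-4)}$, and minimal value $-\tfrac{N-4}{N-2}|B|\epsilon\,\lambda_*^{-2}$. Substituting the values of $A$, $B$ and multiplying by the prefactor $(N(N-2)/S_N)^{(N-2)/2}$ from \eqref{exp-quot-PU} reproduces exactly the constant $C_N$ of \eqref{cn} times $\phi(x)^{-2/(N-4)}|V(x)|^{(N-2)/(N-4)}\epsilon^{(N-2)/(N-4)}$. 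Taking the supremum over $x\in\mathcal N(V)$ then gives the bound \eqref{eq-upperb1}. The errors $o((d(x)\lambda_*)^{2-N})$ and $o(\epsilon\lambda_*^{-2})$ are both $o(\epsilon^{(N-2)/(N-4)})$ because $d(x)$ is bounded below and $\lambda_*\sim\epsilon^{-1/(N-4)}$.

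For $N=4$ the analogous minimization of $\lambda^{-2}(A+B\epsilon\log\lambda)$ with $A=8\,a_4\phi(x)$, $B=b_4 V(x)<0$ has critical point $\log\lambda_*=\tfrac12-A/(B\epsilon)$, so $\log\lambda_*\sim A/(|B|\epsilon)$, and minimal value $-\tfrac12|B|\epsilon\,\lambda_*^{-2}=-\tfrac{|B|\epsilon}{2e}\exp(-2A/(|B|\epsilon))$. Using the explicit values $a_4=\omega_4/4$ and $b_4=\omega_4$ one finds $2A/|B|=4\phi(x)/|V(x)|$, and optimizing over $x$ yields the exponent $-4/(\sigma_4(\Omega,V)\,\epsilon)$. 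The polynomial prefactor $\epsilon$ and the error terms from \eqref{exp-quot-PU} (which at $\lambda=\lambda_*$ are of size $o(\lambda_*^{-2})$) are absorbed into the $(1+o(1))$ inside the exponent of \eqref{eq-upperb2} via the elementary identity $\epsilon^\alpha\,e^{-c/\epsilon}=\exp(-\tfrac{c}{\epsilon}(1+o(1)))$ as $\epsilon\to 0+$.

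I expect the main technical point to be precisely this last absorption step in the case $N=4$: one must check that, at the optimal $\lambda_*$, the $o(\cdot)$ error terms in \eqref{exp-quot-PU} do not perturb either the sign of the correction or the leading exponential rate $4/(\sigma_4\epsilon)$. For $N\ge 5$ the corresponding verification is a straightforward comparison of powers of $\epsilon$, and the rest of the argument is an essentially mechanical substitution and constant-matching using \eqref{anbn} and \eqref{cn}.
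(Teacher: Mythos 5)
Your argument is correct and follows essentially the same route as the paper: insert $PU_{x,\lambda}$ into \eqref{exp-quot-PU}, optimize in $\lambda$ to get $\lambda_*\sim\epsilon^{-1/(N-4)}$ (resp.\ $\log\lambda_*\sim A/(|B|\epsilon)$ for $N=4$), match constants, and absorb the polynomial prefactor into the exponent via the elementary identity when $N=4$. The only cosmetic difference is that the paper fixes a maximizer $z_0\in\mathcal N(V)$ (using that $\phi^{-2/(N-4)}|V|^{(N-2)/(N-4)}$ extends continuously by zero to $\partial\mathcal N(V)$, via \eqref{phi near bdry}), whereas you work with an approximately maximizing sequence kept away from $\partial\Omega$; both are fine.
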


\begin{proof}
[Proof of Corollary \ref{cor-upperb}]
By \cite[(2.8)]{rey2}, we have 
\begin{equation}
\label{phi near bdry}
d(x)^{2-N} \lesssim \phi(x) \lesssim d(x) ^{2-N}.
\end{equation}
(Note that this bound uses the $C^2$ assumption on $\Omega$.) 
Since, moreover, $V=0$ on $\partial\mathcal N(V)\setminus\partial\Omega$, the function $\phi^{-\frac{2}{N-4}}\ |V|^{\frac{N-2}{N-4}}$ can be extended to a continuous function on $\overline{\mathcal N(V)}$ which vanishes on $\partial\mathcal N(V)$. Thus there is $z_0 \in \mathcal N(V)$ such that
\begin{equation} \label{z0-1}
\sigma_N(\Omega, V) = \phi(z_0)^{-\frac{2}{N-4}}\ |V(z_0)|^{\frac{N-2}{N-4}}, \qquad N \geq 5 .
\end{equation}
The corollary for $N \geq 5$ now follows by choosing $x = z_0$ in \eqref{exp-quot-PU} and optimizing the quantity $\frac{N(N-2)\, a_N \, \phi(z_0)}{\lambda^{N-2}} +b_N\, \eps\, \frac{V(z_0)}{\lambda^{2}}$ in $\lambda$. The optimal choice is 
\begin{equation} \label{lambda-eps}
\lambda(\eps) = \left(\frac{N\, (N-2)^2\, a_N \,\phi(z_0)}{2 \,b_N\,  |V(z_0)|}\right)^{\frac{1}{N-4}}\, \eps^{-\frac{1}{N-4}} \, ,
\end{equation}
and \eqref{eq-upperb1} follows from a straightforward computation.

Similarly, if $N = 4$, since $\frac{\phi(y)}{|V(y)|}$ is a positive continuous function on $\mathcal N(V)$ which goes to $+\infty$ as $y \to \partial \mathcal N(V)$, we find some $z_0 \in \mathcal N(V)$ such that 
\begin{equation} \label{z0-2}
\sigma_4(\Omega, V) = \frac{\phi(z_0)}{|V(z_0)|} \, .
\end{equation}
Thus we may choose $x = z_0$ in \eqref{exp-quot-PU} and optimize the quantity $A \lambda^{-2} - B \epsilon \lambda^{-2} \log \lambda$ in $\lambda > 0$, where $A = 8\, a_4 \, \phi(z_0) + o(1)$ and $B = b_4\, |V(z_0)| + o(1)$. The optimal choice is
\begin{equation} \label{lambda-eps-2}
\lambda(\eps) = \sqrt{e} \exp  \left(\frac{A}{B \epsilon} \right).
\end{equation}
Inserting this into \eqref{exp-quot-PU}, we get 
\begin{align*}
S(\epsilon V) \leq \mathcal S_{\epsilon V}[PU_{x, \lambda(\epsilon)}] &= S_4 - \frac{4 b_4}{e S_4} \epsilon |V(z_0)| \exp\left(- \frac{16\, a_4 \, (\phi(z_0) + o(1))}{b_4\, \eps\, |V(z_0)| + o(1)}\right)  \\
&= S_4 -  \exp\Big( - \frac 4\epsilon \left(1 +o(1)\right) \inf_{x\in\mathcal N(V)} \frac{\phi(x)}{|V(x)|}  \Big)\,, 
\end{align*}
where we have used the fact that 
\begin{equation} \label{eq-revised}
\epsilon\, b \exp\Big(-\frac a\epsilon\Big) = \exp\Big(-\frac a\epsilon +o\Big(\frac 1\epsilon \Big)\Big) , \qquad \epsilon\to 0+
\end{equation}
holds for all $a\geq 0$ and all $b>0$.

This completes the proof of \eqref{eq-upperb2}, and thus of Corollary \ref{cor-upperb}.
\end{proof}

\begin{proof}
[Proof of Theorem \ref{thm expansion PU}]
We prove equations \eqref{exp-NablaPU}--\eqref{exp-PUq} separately. Then expansion \eqref{exp-quot-PU} follows by a straightforward Taylor expansion of the quotient functional $\mathcal S_{\eps V}[PU_{x, \lambda}]$. 

\emph{Proof of \eqref{exp-NablaPU}.    } 
Since the $U_{x, \lambda}$ satisfy the equation 
\begin{equation} \label{u-eq}
-\Delta_y U_{x,\lambda}(y) = N(N-2)\, U_{x,\lambda}(y)^{q-1}, \quad y \in \R^N,
\end{equation}
it follows using integration by parts that 
$$
\int_\Omega | \nabla P U_{x, \lambda}|^2 \, dy = N(N-2) \int_\Omega U^{q-1}_{x, \lambda} P U_{x, \lambda}\, dy .
$$
On the other hand, by \cite[Prop.~1]{rey2} we know that 
\begin{equation} \label{u-split}
P U_{x, \lambda} = U_{x, \lambda} -\varphi_{x,\lambda}, \qquad \varphi_{x,\lambda} = \frac{H(x,\cdot)}{\lambda^{(N-2)/2}} + f_{x,\lambda},
\end{equation}
where
\begin{equation} \label{sup-f}
\|f_{x,\lambda}\|_{L^\infty(\Omega)} = \mathcal{O} \left( \lambda^{-(N+2)/2}\,  d(x)^{-N}\right) , \qquad \lambda\to\infty.
\end{equation}
By putting the above equations together we obtain 
\begin{equation} \label{eq-1}
\int_\Omega | \nabla PU_{x, \lambda}|^2 \, dy = N(N-2) \left( \int_\Omega U^{q}_{x, \lambda} \, dy -\lambda^{\frac{2-N}{2}}  \int_\Omega U^{q-1}_{x, \lambda} \, H(x, \cdot) \, dy - \int_\Omega U^{q-1}_{x, \lambda}\,  f_{x,\lambda} \, dy \, \right).
\end{equation}

A direct calculation shows that 
\begin{equation} \label{uq}
\int_\Omega  U_{x, \lambda}^q \, dy = \int_{\R^N}  U_{x, \lambda}^q \, dy + \mathcal{O}((d(x)\lambda)^{-N}) = \left( \frac{S_N}{N(N-2)}\right)^{\frac{q}{q-2}} + \mathcal{O}((d(x)\lambda)^{-N}).
\end{equation}

Moreover, for any $x\in\Omega$ we have
\begin{equation}\label{sup-h}
d(x)^{2-N} \ \lesssim\ \| H(x, \cdot) \|_{L^\infty(\Omega)} \ \lesssim\ d(x)^{2-N}
\end{equation}
and 
\begin{equation}\label{sup-h'}
\sup_{y\in\Omega} | \nabla_y H(x, y) | \ \lesssim\  d(x)^{1-N},
\end{equation}
see \cite[Sec.~2 and Appendix]{rey2}. 

Now let $\rho \in (0, \frac{d(x)}{2})$. A direct calculation using \eqref{u-function}, \eqref{sup-h} and \eqref{sup-h'} shows that 
\begin{align*}
\int_{B_\rho(x)} U^{q-1}_{x, \lambda} \, H(x, \cdot) \, dy &= \lambda^{1+\frac N2}\, \big(\phi(x)+\mathcal{O}(\rho\, d(x)^{1-N})\big) 
 \int_{B_\rho(x)}\frac{dy}{(1+\lambda^2|x-y|^2)^{(N+2)/2} } \\
 & = \lambda^{1-\frac N2}\,  a_N\, \left(\phi(x)+\mathcal{O}(\rho\, d(x)^{1-N})\right) ( 1+\mathcal{O}((\lambda\, \rho)^{-2}))  
\end{align*}
and
\begin{align*}
\int_{\Omega \setminus B_\rho(x)} U^{q-1}_{x, \lambda} \, H(x, \cdot) \, dy &= \lambda^{1+\frac N2}\, \mathcal{O}(d(x)^{2-N}) \int_\rho^\infty \frac{r^{N-1}\, dr}{(1+\lambda^2\, r^2)^{\frac{N+2}{2}}} \\
& = \lambda^{1-\frac N2}\, \mathcal{O}(d(x)^{2-N}) \int_{\rho\lambda}^\infty \frac{t^{N-1}\, dt}{(1+ t^2)^{\frac{N+2}{2}}} \\
& =  \lambda^{1-\frac N2}\,  \mathcal{O}\left(d(x)^{2-N}\, (\lambda\, \rho)^{-2}\right). 
\end{align*}
Hence for the second term on the right hand side of \eqref{eq-1} we get
\begin{align} \label{u5h}
\lambda^{\frac{2-N}{2}} \int_\Omega U^{q-1}_{x, \lambda} \, H(x, \cdot) \, dy & = \lambda^{2-N}\, a_N\, \phi(x)  + \lambda^{2-N} \, \mathcal{O}\left(\rho \, d(x)^{1-N}\right) + \lambda^{2-N} \,  \mathcal{O}\left(d(x)^{2-N}\, (\lambda\, \rho)^{-2}\right).
\end{align}
As for the last term on the right hand side of \eqref{eq-1}, we note that in view of \eqref{sup-f} 
$$
\left |  \int_\Omega U^{q-1}_{x, \lambda}\,  f_{x,\lambda} \, dy \, \right | \, \leq \|f_{x,\lambda}\|_{L^\infty(\Omega)}  \int_{\R^N} U^{q-1}_{x, \lambda} \, dy= \|f_{x,\lambda}\|_{L^\infty(\Omega)} \, a_N\, \lambda^{1-\frac N2} = 
 \mathcal{O}\left( (\lambda\, d(x))^{-N}\right).
$$
The claim thus follows from \eqref{eq-1} by choosing $\rho = d(x)^{1/3} \lambda^{-2/3}$ in \eqref{u5h}. (Notice that $\rho = d(x) (d(x) \lambda)^{-2/3}) \leq \frac{d(x)}{2}$ for $\lambda$ large enough.)

\emph{Proof of \eqref{exp-epsVPU}.    }
We have 
\begin{equation} \label{eq-V}
\int_\Omega V\, PU_{x, \lambda}^2 \, dy = \int_\Omega V\,  U_{x, \lambda}^2 \, dy + \int_\Omega V\, (\varphi_{x, \lambda}^2- 2\ U_{x, \lambda}\, \varphi_{x, \lambda} )\, dy \, .
\end{equation}
Since by \cite[Prop.~1]{rey2},
\begin{equation}\label{eq-rey-2}
0  \, \leq \, \varphi_{x,\lambda}(y) \, \leq\,  U_{x,\lambda}(y) \qquad \forall\ y\in\Omega,
\end{equation}
together with \eqref{u-split}, \eqref{sup-f} and \eqref{sup-h} we obtain the following upper bound on the last integral in \eqref{eq-V},
\begin{align*}
\Big | \int_\Omega V\, (\varphi_{x, \lambda}^2- 2\ U_{x, \lambda}\, \varphi_{x, \lambda} ) \, dy \, \Big| & \leq 2\, \|V\|_{L^\infty(\Omega)}\,  \|\varphi_{x, \lambda}\|_{L^\infty(\Omega)}  \int_\Omega U_{x, \lambda} \, dy  = \mathcal{O}\left((d(x) \lambda)^{2-N} \right)  \, .
\end{align*}
To treat the first term on the right hand side of \eqref{eq-V}, first assume $N \geq 5$. Choose a sequence $\rho = \rho_\lambda$ such that $\rho \leq d(x)$, $\rho \to 0$ and $\rho \lambda \to \infty$ as $\lambda \to \infty$. (This is always possible, whether or not $d \to 0$.) Then, by continuity of $V$, 
\begin{align*}
\int_\Omega V\,  U_{x, \lambda}^2 \, dy &=  (V(x)+ o(1)) \int_{B_\rho(x)} U_{x, \lambda}^2 \, dy + \int_{\Omega \setminus B_\rho(x)} V \, U_{x, \lambda}^2 \, dy \\
&=  \lambda^{-2}\, b_N\, V(x) + o(\lambda^{-2}) + \mathcal O \left( \int_{\Omega \setminus B_\rho(x)} U_{x, \lambda}^2 \, dy \right) \\
& =  \lambda^{-2}\, b_N\, V(x) + o(\lambda^{-2}) + \mathcal O \left(\lambda^{-2} (\rho \lambda)^{-N +4} \right) = \lambda^{-2}\, b_N\, V(x) + o(\lambda^{-2}). 
\end{align*}
Similarly, in the case $N = 4$ we let $B_\tau(x)$ and $B_R(x)$ be two balls centered at $x$ with radii $\tau$ and $R$ chosen such that $B_\tau(x) \subset \Omega \subset B_R(x)$ and split the last integration in two parts as follows. Extending $V$ by zero to $B_R(x) \setminus \Omega$ we get
\begin{align}
\int_{\Omega\setminus B_\tau(x)} V\,  U_{x, \lambda}^2 \, dy & = \int_{B_R(x)\setminus B_\tau(x)} V\,  U_{x, \lambda}^2 \, dy \leq \, \omega_4 \, \|V\|_{L^\infty(\Omega)}  \int_{\tau}^R \frac{\lambda^2}{(1+\lambda^2 |x-y|^2)^2}\ r^3\, dr \nonumber \\
& =  \omega_4 \, \|V\|_{L^\infty(\Omega)}\, \lambda^{-2}  \int_{\tau\lambda}^{R\lambda} \frac{t^3}{(1+t^2)^2}\ \, dt = \mathcal{O}(\lambda^{-2}\, \log(R/\tau)) \label{compl}.
\end{align}
On the other hand, denoting by $o_\tau(1)$ a quantity that vanishes as $\tau\to 0$ and assuming that $\tau\lambda\to \infty$ we get
\begin{align*}
\int_{B_\tau(x)} V\,  U_{x, \lambda}^2  \, dy & =b_4\,  V(x)\ \int_0^{\tau} \frac{\lambda^2\, r^3\, dr}{(1+\lambda^2 |x-y|^2)^2}\  + o_\tau(1)\, \int_0^{\tau} \frac{\lambda^2\, r^3\, dr}{(1+\lambda^2 |x-y|^2)^2} \\
& = b_4 \, \lambda^{-2}\, V(x) \int_0^{\tau\lambda} \frac{ t^3\, dt}{(1+t^2)^2} + \lambda^{-2}\, o_\tau(1)\, \int_0^{\tau\lambda} \frac{ t^3\, dt}{(1+t^2)^2} \\
& = b_4\, \frac{\log \lambda}{\lambda^2}\  V(x)  + o_\tau(1)\, \mathcal{O}\left(\frac{\log \lambda}{\lambda^2}\right) + \mathcal{O}\left(\frac{\log \tau}{\lambda^2}\right).
\end{align*}
By choosing $\tau= \frac{1}{\log \lambda}$ and taking into account \eqref{compl} we arrive at \eqref{exp-epsVPU} in case $N = 4$. 

\emph{Proof of \eqref{exp-PUq}.    }

Recall that $q>2$. 
Hence from the Taylor expansion of the function $t\mapsto t^q$ on an interval $[0, b]$ it follows that for any $a\in [0,b]$ we have 
\begin{equation} \label{taylor}
| \, b^q -(b-a)^q -q\, b^{q-1}\, a\, | \, \leq \frac{q (q-1)}{2}\ b^{q-2}\, a^2.
\end{equation}
Because of \eqref{eq-rey-2} and \eqref{u-split} we can apply \eqref{taylor} with $b=U_{x,\lambda}(y)$ and $a= \varphi_{x,\lambda}(y)$ to obtain the following point-wise upper bound:
\begin{align}
\label{taylor-1}
 \big |\, PU_{x, \lambda} ^q - U_{x,\lambda}^q + q\, U_{x,\lambda}^{q-1}\, \varphi_{x,\lambda}\, \, \big |  &\  \leq\  \frac{q(q-1)}{2}\ U_{x,\lambda}^{q-2}\,  \varphi_{x,\lambda}^2 
\end{align} 
Together with estimate \eqref{eq-b} this gives
\begin{align}
\left | \int_\Omega  \left( PU_{x, \lambda} ^q - U_{x,\lambda}^q + q\, U_{x,\lambda}^{q-1}\, \varphi_{x,\lambda} \right) \, dy \, \right | & =  \mathcal{O}\left((d(x)\, \lambda )^{-N}\right)\, . \label{taylor-2}
\end{align}
On the other hand, the calculations in the proof of \eqref{exp-NablaPU} show that 
$$
\int_\Omega U_{x,\lambda}^{q-1}\, \varphi_{x,\lambda} \, dy = \lambda^{2-N}\, a_N\,  \phi(x)  + \mathcal{O}\left((d(x)\lambda)^{\frac43 -N} \right) = \lambda^{2-N}\, a_N\,  \phi(x) + o \left((d(x)\lambda)^{2 -N} \right).
$$
In view of \eqref{uq} and \eqref{taylor-2} this completes the proof.
\end{proof}

\section{\bf Lower bound. Preliminaries }
\label{sec lowerbd pre}

\noindent As a starting point for the proof of the lower bound on $S(\epsilon V)$, we derive a crude asymptotic form of almost minimizers of $\mathcal S_{\eps V}$. The following result is essentially well-known. We have recalled the proof in \cite[Appendix B]{FrKoKo} in the case $N = 3$, but the same argument carries over to $N \geq 4$.

\begin{prop} \label{prop-app-min}
Let $(u_\epsilon)\subset H^1_0(\Omega)$ be a sequence of functions satisfying
\begin{equation}
\label{eq:appminass}
\mathcal S_{\epsilon V}[u_\epsilon] = S_N+o(1) \,,
\qquad
\int_\Omega |u_\epsilon|^q\,dx = \left( \frac{S_N}{N(N-2)} \right) ^\frac{q}{q-2} \,.
\end{equation}
Then, along a subsequence,
\begin{equation} \label{u-rey}
u_\epsilon = \alpha_\epsilon \left( PU_{x_\epsilon,\lambda_\epsilon} + w_\epsilon \right) ,
\end{equation}
where
\begin{equation}
\begin{aligned}  \label{lim-eps}
\alpha_\epsilon & \to s \qquad \text{for some}\ s\in\{-1,+1\} \,,\\
x_\epsilon & \to x_0 \qquad\text{for some}\ x_0\in\overline\Omega \,, \\
\lambda_\epsilon d_\epsilon & \to \infty \,,\\
\| \nabla w_\epsilon \| &\to 0 \qquad\text{and}\qquad w_\epsilon \in T_{x_\epsilon,\lambda_\epsilon}^\bot \,.  
\end{aligned}
\end{equation}
Here $d_\eps= \text{dist }(x_\eps,\partial\Omega)$. 
\end{prop}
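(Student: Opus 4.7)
The plan is to verify that $(u_\eps)$ is, to leading order, a minimizing sequence for the unperturbed sharp Sobolev quotient on $\R^N$; to invoke the standard profile decomposition to obtain concentration at a single Aubin--Talenti bubble; and finally to perform a Bahri--Coron orthogonal projection onto the manifold of projected bubbles.

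First I would check that the linear perturbation is of order $\eps$. By H\"older's inequality,
$$\left|\eps \int_\Omega V u_\eps^2\,dx\right| \leq \eps\,\|V\|_{L^\infty(\Omega)}\,|\Omega|^{(q-2)/q}\left(\int_\Omega |u_\eps|^q\,dx\right)^{2/q} = O(\eps),$$
so \eqref{eq:appminass} implies $\int_\Omega|\nabla u_\eps|^2\,dx = S_N\,(S_N/(N(N-2)))^{2/(q-2)} + o(1)$. Extended by zero to $\R^N$, the sequence $(u_\eps)$ becomes a bounded minimizing sequence for the sharp Sobolev inequality whose $L^q$-mass equals exactly that of a single Aubin--Talenti bubble.

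Second I would invoke the concentration-compactness principle of Lions (equivalently, Struwe's profile decomposition). Since $S_N$ is not attained on any proper subset of $\R^N$, the sequence cannot be relatively compact in $\dot H^1$; dichotomy is ruled out by the precise matching of the $L^q$-mass with that of a single bubble, and vanishing by $\|u_\eps\|_q>0$. Hence, along a subsequence, there exist $s\in\{\pm 1\}$, $x_\eps\in\R^N$, and $\lambda_\eps>0$ with $\lambda_\eps\to\infty$ such that
$$u_\eps - s\,U_{x_\eps,\lambda_\eps}\longrightarrow 0 \quad\text{in } \dot H^1(\R^N),$$
and by the Sobolev embedding also in $L^q(\R^N)$. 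Since $u_\eps$ vanishes outside $\Omega$, this $L^q$-convergence forces $\int_{\R^N\setminus\Omega}U_{x_\eps,\lambda_\eps}^q\,dy\to 0$, and a change of variables $z=\lambda_\eps(y-x_\eps)$ shows this can hold only if $x_\eps\in\Omega$ eventually and $\lambda_\eps d_\eps\to\infty$. Along a further subsequence, $x_\eps\to x_0\in\overline\Omega$.

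Finally, I would run the Bahri--Coron orthogonal decomposition. Formulas \eqref{u-split}--\eqref{sup-f} imply that under $\lambda_\eps d_\eps\to\infty$ one has $\|\nabla(U_{x_\eps,\lambda_\eps}-PU_{x_\eps,\lambda_\eps})\| = o(1)$, so $u_\eps$ lies at $\dot H^1$-distance $o(1)$ from the smooth $(N+2)$-dimensional manifold $\mathcal M = \{\alpha PU_{x,\lambda} : \alpha\in\R,\,x\in\Omega,\,\lambda>0\}$. A standard implicit function argument then produces unique parameters $(\tilde\alpha_\eps, \tilde x_\eps, \tilde\lambda_\eps)$, close to $(s,x_\eps,\lambda_\eps)$, minimizing $\|\nabla(u_\eps - \alpha PU_{x,\lambda})\|^2$; the Euler--Lagrange conditions of this minimization are precisely the orthogonality relations $w_\eps := \tilde\alpha_\eps^{-1} u_\eps - PU_{\tilde x_\eps,\tilde\lambda_\eps} \in T_{\tilde x_\eps,\tilde\lambda_\eps}^\perp$. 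After relabeling $(\tilde\alpha_\eps, \tilde x_\eps, \tilde\lambda_\eps) \to (\alpha_\eps, x_\eps, \lambda_\eps)$, this yields \eqref{u-rey}--\eqref{lim-eps}. The main technical obstacle is the step $\lambda_\eps d_\eps\to\infty$: the profile decomposition by itself permits the bubble to stick to $\partial\Omega$, and ruling this out requires crucially the Dirichlet condition (the extension by zero) to transfer $L^q$-mass considerations from $\R^N$ back to $\Omega$.
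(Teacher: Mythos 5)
Your proof is correct and follows essentially the same route as the one the paper points to (in \cite[Appendix B]{FrKoKo}): show the $\eps V$ term is $O(\eps)$, invoke concentration--compactness (Lions/Struwe) to identify a single Aubin--Talenti profile, use the Dirichlet support constraint to force $\lambda_\eps d_\eps\to\infty$, and then perform the Bahri--Coron $H^1_0$-orthogonal projection onto the $PU_{x,\lambda}$-manifold to extract $(\alpha_\eps,x_\eps,\lambda_\eps,w_\eps)$ with $w_\eps\in T_{x_\eps,\lambda_\eps}^\perp$.
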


\subsection*{Convention}
From now on we will assume that
\begin{equation}
\label{appr-min0}
S(\epsilon V)< S_N
\qquad\text{for all}\ \epsilon>0
\end{equation}
and that $(u_\epsilon)$ satisfies \eqref{appr-min}. In particular, assumption \eqref{eq:appminass} is satisfied. We will always work with a sequence of $\epsilon$'s for which the conclusions of Proposition \ref{prop-app-min} hold. To enhance readability, we will drop the index $\epsilon$ from $\alpha_\epsilon$, $x_\epsilon$, $\lambda_\epsilon$, $d_\epsilon$ and $w_\epsilon$.

\section{\bf Lower bound. The main expansion } 
\label{sec lowerbd exp}

In this section we expand $\mathcal S_{\eps V}[u_\eps]$ by using the decomposition \eqref{u-rey} of $u_\epsilon$. We shall show the following result.

\begin{prop}
\label{prop-lowerbd}
Let $(u_\epsilon)\subset H^1_0(\Omega)$ satisfy \eqref{u-rey} and \eqref{lim-eps}.
Then 
\begin{align}
|\alpha|^{-2} \int_\Omega |\nabla u_\eps|^2 \, dy & =  \int_\Omega |\nabla P U_{x, \lambda}|^2 \, dy + \int_\Omega |\nabla w|^2 \, dy \, , \label{str-1} \\
 |\alpha|^{-q} \int_\Omega |u_\eps|^q \, dy & = \int_\Omega  P U_{x, \lambda}^q \, dy +\frac{q(q-1)}{2}\, \int_\Omega U_{x, \lambda}^{q-2}\, w^2 \, dy + o\left(\int_\Omega |\nabla w|^2+ (\lambda d)^{2-N}\right) \, , \label{str-2} \\ 
|\alpha|^{-2} \eps \int_\Omega V u_\eps^2 \, dy & = \eps \int_\Omega V P U_{x, \lambda}^2  \, dy + \mathcal{O}\left(\eps \int_\Omega |\nabla w|^2 \, dy + \eps \sqrt{\int_\Omega |\nabla w|^2 \, dy}\, \sqrt{ \int_\Omega |V|\,  P U_{x, \lambda}^2 \, dy } \right)  .          \label{str-3}
\end{align}

In particular, 
\begin{align}
\mathcal S_{\eps V}[u_\eps]  &=  \mathcal S_{\eps V}[PU_{x, \lambda}] + I[w] + \mathcal{O}\left(\eps\,  \sqrt{\int_\Omega |\nabla w|^2 \, dy}\ \sqrt{ \int_\Omega |V| P U_{x, \lambda}^2 \, dy} \right) \nonumber \\
& \quad  +  o\left(\int_\Omega |\nabla w|^2 \, dy + (\lambda d)^{2-N}\right), \label{exp-quot-ueps}
\end{align}
where
\begin{equation}
\label{definition I}
I[w] :=  \left(\int_{\Omega} U_{x, \lambda}^q \, dy \right)^{-\frac 2q} \left( \int_\Omega |\nabla w|^2 \, dy - N(N+2)  \int_\Omega U_{x, \lambda}^{q-2}\, w^2 \, dy \right)  . 
\end{equation}
\end{prop}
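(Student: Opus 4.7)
\emph{Strategy.} The plan is to estimate each of the three building blocks $\int|\nabla u_\eps|^2$, $\int|u_\eps|^q$ and $\eps\int V u_\eps^2$ of the Rayleigh quotient separately by inserting the decomposition $u_\eps = \alpha(PU_{x,\lambda}+w)$, systematically using the orthogonality $w\in T_{x,\lambda}^\perp$, and then to combine the three via a first-order Taylor expansion of the functional.

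\emph{Dirichlet and potential pieces.} Identity \eqref{str-1} is immediate from bilinearity of the Dirichlet form together with $\int\nabla PU_{x,\lambda}\cdot\nabla w\,dy=0$. For \eqref{str-3}, I would expand $V(PU_{x,\lambda}+w)^2$, bound the cross term via the weighted Cauchy--Schwarz inequality $|\int V PU_{x,\lambda}\,w\,dy|\leq (\int|V|PU_{x,\lambda}^2\,dy)^{1/2}(\int|V|w^2\,dy)^{1/2}$, and absorb the pure $w^2$ contribution into $O(\eps\int|\nabla w|^2\,dy)$. Both bounds rely on the elementary estimate $\int|V|w^2\,dy\lesssim\|V\|_{L^\infty}\,\|\nabla w\|^2$, which follows from Sobolev embedding (or Poincar\'e) on the bounded domain $\Omega$.

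\emph{The $L^q$ expansion -- the main obstacle.} Estimate \eqref{str-2} is the delicate part. The pointwise Taylor expansion
\[ |PU_{x,\lambda}+w|^q = PU_{x,\lambda}^q + q\,PU_{x,\lambda}^{q-1}w + \tfrac{q(q-1)}{2}PU_{x,\lambda}^{q-2}w^2 + r(PU_{x,\lambda},w) \]
leaves three contributions to control. The \emph{linear term} requires the key observation that, thanks to $-\Delta PU_{x,\lambda} = -\Delta U_{x,\lambda} = N(N-2)U_{x,\lambda}^{q-1}$ (by \eqref{eq-pu} and \eqref{u-eq}), the orthogonality $\int\nabla PU_{x,\lambda}\cdot\nabla w\,dy=0$ is equivalent to $\int U_{x,\lambda}^{q-1}w\,dy=0$, so that
\[ \int_\Omega PU_{x,\lambda}^{q-1}w\,dy = \int_\Omega (PU_{x,\lambda}^{q-1}-U_{x,\lambda}^{q-1})w\,dy. \]
The pointwise bound $|PU_{x,\lambda}^{q-1}-U_{x,\lambda}^{q-1}|\lesssim U_{x,\lambda}^{q-2}\varphi_{x,\lambda}$, from the mean value theorem together with $0\leq\varphi_{x,\lambda}\leq U_{x,\lambda}$ in \eqref{eq-rey-2}, combined with the explicit decay of $\varphi_{x,\lambda}$ provided by \eqref{u-split}--\eqref{sup-f} and a suitable application of H\"older's inequality and Sobolev embedding, controls this as $o(\|\nabla w\|^2+(\lambda d)^{2-N})$. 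The \emph{quadratic coefficient} $PU_{x,\lambda}^{q-2}$ is then replaced by $U_{x,\lambda}^{q-2}$, producing an error of the same order. The \emph{pointwise remainder} $r$ satisfies $|r|\lesssim |w|^q$ when $q\leq 3$ and $|r|\lesssim PU_{x,\lambda}^{q-3}|w|^3+|w|^q$ otherwise; in either case Sobolev embedding together with $\|\nabla w\|=o(1)$ yields $\int|r|\,dy=o(\|\nabla w\|^2)$.

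\emph{Assembling \eqref{exp-quot-ueps}.} With the three expansions in hand, write $\mathcal S_{\eps V}[u_\eps]=(A+a)/(B+b+\mathrm{err})^{2/q}$, where $A,B$ denote the $PU_{x,\lambda}$-contributions to numerator and denominator and $a,b$ the leading $w$-contributions supplied by \eqref{str-1}, \eqref{str-2} and \eqref{str-3}. Expanding $(1+t)^{-2/q} = 1 - \tfrac{2t}{q}+O(t^2)$ in the denominator yields
\[ \mathcal S_{\eps V}[u_\eps]=\mathcal S_{\eps V}[PU_{x,\lambda}] + B^{-2/q}\!\int|\nabla w|^2\,dy - \mathcal S_{\eps V}[PU_{x,\lambda}]\cdot\frac{(q-1)\int U_{x,\lambda}^{q-2}w^2\,dy}{B} + \mathrm{errors}. \]
Using $\mathcal S_{\eps V}[PU_{x,\lambda}]=S_N+o(1)$ from \eqref{exp-quot-PU}, $B=\int U_{x,\lambda}^q\,dy+o(1)$, the Sobolev identity $\int|\nabla U_{x,\lambda}|^2\,dy=N(N-2)\int U_{x,\lambda}^q\,dy$ and the arithmetic relation $N(N-2)(q-1)=N(N+2)$, the two $w$-dependent contributions merge precisely into $I[w]$ as defined in \eqref{definition I}, while the surviving remainders match exactly those of \eqref{exp-quot-ueps}. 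The main conceptual obstacle, as indicated above, is the careful estimation of $\int(PU_{x,\lambda}^{q-1}-U_{x,\lambda}^{q-1})w\,dy$ in the linear part of the $L^q$ expansion; once this is done, everything else is a routine combination of Taylor expansions and Cauchy--Schwarz.
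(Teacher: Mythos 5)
Your proposal is correct and tracks the paper's argument quite closely; the main steps (orthogonality kills the linear term, H\"older plus Lemma~\ref{lem-tech} controls the $\varphi$-corrections, and Taylor expansion of the quotient with the identity $q-1=(N+2)/(N-2)$ produces $I[w]$) are all there. The only genuine difference is in how you organize the $L^q$ expansion: the paper writes $\alpha^{-1}u_\eps = U_{x,\lambda} + (w-\varphi_{x,\lambda})$ and Taylor-expands around $U_{x,\lambda}$, then uses \eqref{taylor-1} to trade back to $PU_{x,\lambda}^q$, so that all $\varphi$-dependent errors appear in a single list; you instead expand $|PU_{x,\lambda}+w|^q$ directly around $PU_{x,\lambda}$ and then replace $PU_{x,\lambda}^{q-1}$ and $PU_{x,\lambda}^{q-2}$ by $U_{x,\lambda}^{q-1}$, $U_{x,\lambda}^{q-2}$ in two separate steps, each controlled via $0\le\varphi_{x,\lambda}\le U_{x,\lambda}$ and H\"older. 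Both choices lead to the same family of error terms (in particular both hinge on the estimate of $\int U_{x,\lambda}^{q-2}\varphi_{x,\lambda}|w|\,dy$ by $(\int U_{x,\lambda}^{q(q-2)/(q-1)}\varphi_{x,\lambda}^{q/(q-1)})^{(q-1)/q}\,\|w\|_{L^q}$, i.e.~Lemma~\ref{lem-tech}); the paper's version is marginally more economical because it treats all the $\varphi$-terms in one pass, while yours is slightly more transparent about which replacement produces which error. Either way the proof is sound.
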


\begin{proof}
We prove equations \eqref{str-1}--\eqref{str-3} separately. Then the expansion \eqref{exp-quot-ueps} follows by a straightforward Taylor expansion of the quotient functional $\mathcal S_{\eps V}$, using $\mathcal S_{\eps V}[u_\eps] =\mathcal S_{\eps V}[|\alpha|^{-1} u_\eps]$.

In the sequel we denote by $c_1, c_2, \dots $ various positive constants which are independent of $\eps$.

\emph{Proof of \eqref{str-1}. }  This follows by \eqref{u-rey} and  $w \in T_{x,\lambda}^\bot$.

\emph{Proof of \eqref{str-2}.    } Recall that $\alpha^{-1} u_\eps = U_{x, \lambda} + (w - \varphi_{x, \lambda})$ by \eqref{u-split} and \eqref{u-rey}. We use the associated pointwise estimate
\begin{align*}
& \left | |\alpha|^{-q} |u_\eps|^q - U_{x, \lambda}^q - q \,  U_{x, \lambda}^{q-1} (w- \varphi_{x, \lambda}) -\frac{q(q-1)}{2}\, U_{x, \lambda}^{q-2} (w-\varphi_{x, \lambda})^2 \right |   \\
& \qquad \leq c_1 \left(  |w-\varphi_{x, \lambda}|^q +  |w- \varphi_{x, \lambda}|^{q-(q-3)_+}  U_{x, \lambda}^{(q-3)_+} \right),
\end{align*}
where $(q-3)_+= \max\{q-3, 0\}$.
Using \eqref{taylor-1}, it follows that 
\begin{align*}
& \qquad \left | |\alpha|^{-q} |u_\eps|^q - PU_{x, \lambda}^q - q \,  U_{x, \lambda}^{q-1} w -\frac{q(q-1)}{2}\, U_{x, \lambda}^{q-2} w^2 \right | \\
&  \leq \, c_2 \left ( |w- \varphi_{x, \lambda}|^q +  |w-  \varphi_{x, \lambda}|^{q-(q-3)_+}  U_{x, \lambda}^{(q-3)_+} +   U_{x, \lambda}^{q-2} \varphi_{x, \lambda}\, |w| +  U_{x, \lambda}^{q-2} \varphi_{x, \lambda}^2 \right) \\
& \leq \, c_3 \left( |w|^q +\varphi_{x, \lambda}^q  +  |w|^{q-(q-3)_+}  U_{x, \lambda}^{(q-3)_+}  + \varphi_{x, \lambda}^{q-(q-3)_+}  U_{x, \lambda}^{(q-3)_+} + U_{x, \lambda}^{q-2} \varphi_{x, \lambda}\, |w| +  U_{x, \lambda}^{q-2} \varphi_{x, \lambda}^2 \right) \\
& \leq \, c_4 \left( |w|^q  +  |w|^{q-(q-3)_+}  U_{x, \lambda}^{(q-3)_+}  + U_{x, \lambda}^{q-2} \varphi_{x, \lambda}\, |w| +  U_{x, \lambda}^{q-2} \varphi_{x, \lambda}^2 \right).
\end{align*}
In the last inequality we used \eqref{eq-rey-2} to simplify the form of the remainder terms. Now we use the identity
$$
N\, (N-2) \int_\Omega U_{x, \lambda}^{q-1} w  \, dy = \int_\Omega \nabla U_{x, \lambda} \cdot \nabla w \, dy  = \int_\Omega \nabla P U_{x, \lambda} \cdot \nabla w \, dy  =0,
$$
which follows from \eqref{eq-pu},  \eqref{u-eq} and $w \in T_{x,\lambda}^\bot$, and the fact that  $\int_\Omega |w|^q \to 0$, which follows from \eqref{lim-eps} and the Sobolev inequality. 
Therefore, with the help of the H\"older inequality, we find 
\begin{align*}
& \quad \, \left | \int_\Omega \left( \, |\alpha|^{-q} |u_\eps|^q - P U_{x, \lambda}^q - \frac{q(q-1)}{2}\,  U_{x, \lambda}^{q-2} w^2 \right) dy  \, \right | \\
& \leq \, c_4 \Bigg[  \int_\Omega  |w|^q \, dy
 + \left(\int_\Omega  |w|^q \, dy \right)^{\frac{q-(q-3)_+}{q}} \left( \int_\Omega  U_{x, \lambda}^{q} \, dy \right)^{\frac{(q-3)_+}{q}}  \\
& \quad \qquad +  \left( \int_\Omega U_{x, \lambda}^{\frac{q(q-2)}{q-1}}\, \varphi_{x, \lambda}^{\frac{q}{q-1}} \, dy  \right)^{\frac{q-1}{q}}\, \left(\int_\Omega  |w|^q \, dy  \right)^{\frac{1}{q}} + \int_\Omega U_{x, \lambda}^{q-2}\, \varphi_{x, \lambda}^2 \, dy   \Bigg] \\
& \leq \, c_5 \Bigg[ \left(\int_\Omega  |\nabla w|^2 \, dy \right)^{\frac{q-(q-3)_+}{2}}  +  \left( \int_\Omega U_{x, \lambda}^{\frac{q(q-2)}{q-1}}\, \varphi_{x, \lambda}^{\frac{q}{q-1}} \, dy  \right)^{\frac{q-1}{q}}\, \left(\int_\Omega  |\nabla w|^2 \, dy  \right)^{\frac{1}{2}} + \int_\Omega U_{x, \lambda}^{q-2}\, \varphi_{x, \lambda}^2 \, dy   \Bigg] \,.
\end{align*}
In the last inequality, we used the Sobolev inequality for $w$ and the \eqref{lim-eps} for $w$, together with
$$
\int_\Omega  U_{x, \lambda}^{q} \, dy \  \leq \ \int_{\R^N}  U_{x, \lambda}^{q} \, dy  \ =\  \left( \frac{S_N}{N(N-2)}\right) ^{\frac{q}{q-2}} \,.
$$
It follows from Lemma \ref{lem-tech} and \eqref{lim-eps} that
\begin{align*} 
\left( \int_\Omega U_{x, \lambda}^{\frac{q(q-2)}{q-1}}\, \varphi_{x, \lambda}^{\frac{q}{q-1}} \, dy \right)^{\frac{q-1}{q}} \ & = \ 
o\left((d \lambda)^{\frac{2-N}{2}}\right)\, ,  \\
 \int_\Omega U_{x, \lambda}^{q-2}\, \varphi_{x, \lambda}^2 \, dy  \ & = \ o\left((d\, \lambda)^{2-N}\right).
\end{align*}
Thus, we conclude that, as $\eps \to 0$,
\begin{align*}
& \left | \int_\Omega \left( \, |\alpha|^{-q} |u_\eps|^q - \, P U_{x, \lambda}^q - \frac{q(q-1)}{2}\,  U_{x, \lambda}^{q-2} w^2 \right) \, dy \, \right |  = 
o\left(\int_\Omega |\nabla w|^2 \, dy \, + (\lambda d)^{2-N}\right).
\end{align*}

\smallskip

\emph{Proof of \eqref{str-3}.    }
We write 
\begin{equation} \label{eq-V-2}
|\alpha|^{-2} \int_\Omega V u_\eps^2 \, dy  =  \int_\Omega V\, P U_{x, \lambda}^2 \, dy  + 2\,  \int_\Omega V \, P U_{x, \lambda}\,  w \, dy  + \int_\Omega V  w^2 \, dy \, .
\end{equation}
By the H\"older and Sobolev inequalities we have 
$$
\left |\, \int_\Omega V  w^2 \, dy \, \right | \leq \left(\int_\Omega |V|^{\frac N2} \, dy  \right)^{\frac 2N} \left(\int_\Omega |w|^q \, dy \right)^{\frac 2q} \leq S_N^{-1} \left(\int_\Omega |V|^{\frac N2}\, dy  \right)^{\frac 2N} \int_\Omega |\nabla w|^2 \, dy  \, , 
$$
and
\begin{align*}
\left |\, \int_\Omega V P U_{x, \lambda}\,  w \, dy \, \right |& \leq \left(  \int_\Omega |V| \, P U_{x, \lambda}^2 \, dy  \right)^{\frac 12}\, \left(\int_\Omega |V|\, w^2 \, dy  \right)^{\frac 12} \\
& \leq S_N^{-1/2} \left(  \int_\Omega |V| \, P U_{x, \lambda}^2 \, dy   \right)^{\frac 12}\, \left(\int_\Omega |V|^{\frac N2} \, dy \right)^{\frac 1n} \left (\int_\Omega |\nabla w|^2 \, dy \right)^{\frac 12}.
\end{align*}
Hence \eqref{str-3} follows by inserting these estimates into \eqref{eq-V-2}.
\end{proof}

\section{\bf Proof of the main results}
\label{sec pfmain}

We now deduce Theorems \ref{thm expansion} and \ref{thm-minimizers} from Proposition \ref{prop-lowerbd}. To do so, we make crucial use of the following coercivity bound  proved in \cite[Appendix D]{rey2}.

\begin{prop} \label{prop-rey}
For all $x \in \Omega$, $\lambda > 0$ and $v \in T_{x, \lambda}^\perp$, one has
\begin{equation} \label{eq-rey}
\int_\Omega |\nabla v|^2 \, dy  - N(N+2) \, \int_\Omega U_{x, \lambda}^{q-2}\,v^2 \, dy  \ \geq \, \frac{4}{N+4} \int_\Omega |\nabla v|^2\, dy \, .
\end{equation}
\end{prop}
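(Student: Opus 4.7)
My plan is to reduce the problem on $\Omega$ to a full-space spectral statement for the linearized Sobolev operator on $\mathbb{R}^N$, then apply the (by now) classical spectral analysis via conformal transfer to $S^N$.

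First I would extend $v$ by zero to $\mathbb{R}^N$; since $v\in H^1_0(\Omega)$, this is still $H^1$ and preserves $\int|\nabla v|^2$. The key observation is that the orthogonality condition $v\in T_{x,\lambda}^\perp$ in $\Omega$ translates to the same orthogonality against $U_{x,\lambda}$, $\partial_\lambda U_{x,\lambda}$, $\partial_{x_i}U_{x,\lambda}$ on $\mathbb{R}^N$. Indeed, because $\Delta PU_{x,\lambda}=\Delta U_{x,\lambda}$ on $\Omega$ and $PU_{x,\lambda}\in H^1_0(\Omega)$, integration by parts gives
\[
\int_\Omega \nabla v\cdot\nabla PU_{x,\lambda}\,dy=-\int_\Omega v\,\Delta U_{x,\lambda}\,dy=-\int_{\mathbb{R}^N}v\,\Delta U_{x,\lambda}\,dy=\int_{\mathbb{R}^N}\nabla v\cdot\nabla U_{x,\lambda}\,dy,
\]
using that $v$ vanishes outside $\Omega$. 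The same identity holds with $\partial_\lambda$ or $\partial_{x_i}$ applied to $PU$ and $U$, since these derivatives commute with the harmonic extension defining $\varphi_{x,\lambda}$. Moreover, $\int_\Omega U_{x,\lambda}^{q-2} v^2\,dy=\int_{\mathbb{R}^N}U_{x,\lambda}^{q-2}v^2\,dy$ trivially. Therefore it suffices to prove the inequality for $v\in \dot H^1(\mathbb{R}^N)$ orthogonal in $\dot H^1$ to $U_{x,\lambda}$, $\partial_\lambda U_{x,\lambda}$ and $\partial_{x_i}U_{x,\lambda}$.

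By translation and scaling invariance I can reduce to $(x,\lambda)=(0,1)$, so I want to show
\[
\int_{\mathbb{R}^N}|\nabla v|^2\,dy-N(N+2)\int_{\mathbb{R}^N}U_{0,1}^{q-2}v^2\,dy\;\geq\;\frac{4}{N+4}\int_{\mathbb{R}^N}|\nabla v|^2\,dy
\]
for $v\in\dot H^1(\mathbb{R}^N)$ orthogonal to $U_{0,1}$ and its parameter derivatives. This is a spectral statement for the quadratic form $Q[v]=\int|\nabla v|^2-N(N+2)\int U_{0,1}^{q-2}v^2$ relative to the Dirichlet form $\int|\nabla v|^2$. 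Via the stereographic projection $\Pi:S^N\to\mathbb{R}^N$ together with the conformal identification $U_{0,1}(y)=(\text{Jacobian factor})$, the generalized eigenvalue problem $-\Delta f=\mu\,N(N+2)U_{0,1}^{q-2}f$ transforms into $-\Delta_{S^N}F+\frac{N(N-2)}{4}F=\mu\frac{N(N+2)}{4}F$ on $S^N$. The spectrum of $-\Delta_{S^N}$ is $\{k(k+N-1):k\ge 0\}$, so the generalized eigenvalues are $\mu_k=\frac{4k(k+N-1)+N(N-2)}{N(N+2)}$, giving $\mu_0=\frac{N-2}{N+2}<1$ (eigenspace spanned by $U_{0,1}$), $\mu_1=1$ with $(N+1)$-dimensional eigenspace spanned exactly by $\partial_\lambda U_{0,\lambda}|_{\lambda=1}$ and $\partial_{x_i}U_{0,1}$, and $\mu_2=\frac{N+6}{N+2}$. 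On the orthogonal complement of the $\mu_0$ and $\mu_1$ eigenspaces, we get $Q[v]\ge (1-\mu_2^{-1})\int|\nabla v|^2=\frac{4}{N+4}\int|\nabla v|^2$, which is exactly the desired constant.

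The main obstacle is the spectral computation on $S^N$: identifying the eigenspaces for $\mu_0$ and $\mu_1$ with precisely the $(N+2)$-dimensional tangent space $T_{x,\lambda}$ and verifying that no further low modes are missed. Once this identification is in place (which uses the conformal invariance of the Sobolev quotient and the explicit action of the conformal group of $S^N$, whose infinitesimal generators produce translations and dilations of $U_{x,\lambda}$), the rest of the argument is linear algebra. I would finish by noting that the reduction step preserves the inequality with the same constant, independently of $x\in\Omega$ and $\lambda>0$, since the full-space constant $\frac{4}{N+4}$ is dimensionless.
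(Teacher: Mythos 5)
The paper does not prove this proposition at all: it cites it directly from Rey's work (see the sentence preceding the statement, pointing to Appendix~D of \cite{rey2}). Your proposal therefore supplies a self-contained argument that the paper omits, and the overall strategy — extend $v$ by zero, convert the $T_{x,\lambda}^\perp$ condition on $\Omega$ into orthogonality against $U_{x,\lambda}$ and its parameter derivatives in $\dot H^1(\mathbb{R}^N)$ via $\Delta PU_{x,\lambda}=\Delta U_{x,\lambda}$ and integration by parts, then apply the full-space spectral gap of the linearized Sobolev operator after conformal transfer to $S^N$ — is sound. The orthogonality transfer is correct as you wrote it, including for $\partial_\lambda$ and $\partial_{x_i}$ since these commute with $\Delta$, and $\int_\Omega U_{x,\lambda}^{q-2}v^2 = \int_{\mathbb{R}^N}U_{x,\lambda}^{q-2}v^2$ trivially for the zero extension. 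This gives the claimed bound for all $(x,\lambda)$ by scaling invariance.

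There is, however, an arithmetic slip in the spectral computation that you should fix. From your own (correct) formula $\mu_k = \frac{4k(k+N-1)+N(N-2)}{N(N+2)}$, the $k=2$ generalized eigenvalue is
\[
\mu_2 = \frac{8(N+1)+N(N-2)}{N(N+2)} = \frac{N^2+6N+8}{N(N+2)} = \frac{(N+2)(N+4)}{N(N+2)} = \frac{N+4}{N},
\]
not $\frac{N+6}{N+2}$ as you wrote. With the corrected value one gets $1-\mu_2^{-1} = 1-\frac{N}{N+4} = \frac{4}{N+4}$, which is exactly the stated constant; with the value you displayed, $1-\mu_2^{-1}$ would instead equal $\frac{4}{N+6}$, so the chain of equalities as written is internally inconsistent even though the final claim $\frac{4}{N+4}$ happens to be the right answer. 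Once $\mu_2$ is corrected, the rest of the argument (expand $v$ in the generalized eigenbasis, use simultaneous orthogonality in $\dot H^1$ and in $L^2(U^{q-2}\,dy)$, and bound from below by the first omitted mode) goes through and yields the proposition.
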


\begin{cor}
\label{cor-lowerbd}
For all $\eps > 0$ small enough, we have, if $N\geq 5$,
\begin{align} 
0 \geq (1 + o(1)) (S_N - S(\epsilon V)) &+  \left( \frac{S_N}{N(N-2)}\right)^{\frac{2}{2-q}} \left( \frac{N(N-2)\, a_N \, \phi(x)}{\lambda^{N-2}} +b_N\, \eps\, \frac{V(x)}{\lambda^{2}} \right) \nonumber \\
& + c \int_\Omega |\nabla w|^2 \, dy +  o((\lambda d)^{2-N}) + o(\epsilon \lambda^{-2}) \label{lowerbd-cor}
\end{align}
and, if $N=4$,
\begin{align}
0 \geq (1 + o(1)) (S_4 - S(\epsilon V)) &+ \frac{8}{S_4} \left( \frac{8 a_4 \phi(x)}{\lambda^{2}} + b_4 V(x) \frac{ \epsilon \log \lambda }{\lambda^2} \right) \nonumber \\
& + c \int_\Omega |\nabla w|^2 \, dy +  o((\lambda d)^{-2}) + o(\epsilon  \lambda^{-2} \log \lambda) \,.  \label{lowerbd-cor-N4}
\end{align}
\end{cor}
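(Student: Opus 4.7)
\textbf{Proof proposal for Corollary \ref{cor-lowerbd}.} The plan is to combine the expansion \eqref{exp-quot-ueps} from Proposition \ref{prop-lowerbd} with the coercivity bound \eqref{eq-rey} from Proposition \ref{prop-rey}, then substitute the already-computed expansion of $\mathcal S_{\eps V}[PU_{x,\lambda}]$ from Theorem \ref{thm expansion PU}.

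First I would apply Proposition \ref{prop-rey} to $I[w]$. Since $\int_\Omega U_{x,\lambda}^q \, dy \to \bigl(S_N/(N(N-2))\bigr)^{q/(q-2)}$, the normalizing prefactor in the definition \eqref{definition I} of $I[w]$ stays bounded away from $0$ and $\infty$, so Proposition \ref{prop-rey} yields
\[ I[w] \geq 2c \int_\Omega |\nabla w|^2 \, dy \]
for some $c > 0$ independent of $\epsilon$ (the factor of $2$ being for later use). Next I would rewrite the left-hand side of \eqref{exp-quot-ueps} using the almost-minimizing assumption \eqref{appr-min}: one has $\mathcal S_{\eps V}[u_\eps] = S(\eps V) + o(S_N - S(\eps V))$, or equivalently $S_N - \mathcal S_{\eps V}[u_\eps] = (1 + o(1))(S_N - S(\eps V))$.

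The next step is to substitute the expansion \eqref{exp-quot-PU} of $\mathcal S_{\eps V}[PU_{x,\lambda}]$ from Theorem \ref{thm expansion PU} into \eqref{exp-quot-ueps}. After rearranging, I obtain an inequality of the form
\begin{equation*}
0 \geq (1+o(1))(S_N - S(\eps V)) + \left(\tfrac{S_N}{N(N-2)}\right)^{\!\frac{2}{2-q}}\!\! \left(\tfrac{N(N-2) a_N \phi(x)}{\lambda^{N-2}} + \tfrac{b_N \eps V(x)}{\lambda^2}\right) + 2c \int_\Omega |\nabla w|^2 \, dy + R_\eps,
\end{equation*}
where $R_\eps$ consists of the cross-term $\mathcal{O}\bigl(\eps \,\|\nabla w\|\, \sqrt{\int_\Omega |V| PU_{x,\lambda}^2\, dy}\bigr)$ from \eqref{exp-quot-ueps}, together with error terms $o((\lambda d)^{2-N}) + o(\eps \lambda^{-2})$ (or $o((\lambda d)^{-2}) + o(\eps \lambda^{-2} \log \lambda)$ when $N=4$) coming from Theorem \ref{thm expansion PU}. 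The analogous formula in the $N=4$ case is obtained similarly from the second branch of \eqref{exp-quot-PU}.

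The main obstacle is to absorb the cross-term in $R_\eps$. I would estimate the integral $\int_\Omega |V| PU_{x,\lambda}^2 \, dy$ by the same method as in the proof of \eqref{exp-epsVPU}, which gives $\mathcal{O}(\lambda^{-2})$ for $N \geq 5$ and $\mathcal{O}(\lambda^{-2}\log \lambda)$ for $N = 4$. Then Young's inequality in the form $\eps ab \leq c a^2 + \tfrac{\eps^2}{4c} b^2$ yields
\[ \mathcal{O}\!\left(\eps\, \|\nabla w\|\, \sqrt{\textstyle\int_\Omega |V|\, PU_{x,\lambda}^2\, dy}\,\right) \leq c \int_\Omega |\nabla w|^2\, dy + \mathcal{O}(\eps^2 \lambda^{-2}) \quad (N\geq 5), \]
and analogously with $\mathcal{O}(\eps^2 \lambda^{-2} \log \lambda)$ for $N = 4$. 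Since $\eps^2 \lambda^{-2} = o(\eps \lambda^{-2})$ (and likewise with the extra $\log \lambda$), this term is absorbed into the existing $o(\eps \lambda^{-2})$ remainder, while the $c \|\nabla w\|^2$ piece is absorbed into half of the $2c \|\nabla w\|^2$ already present. This yields \eqref{lowerbd-cor} and \eqref{lowerbd-cor-N4}.
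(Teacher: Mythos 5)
Your proposal matches the paper's proof step by step: apply Proposition \ref{prop-rey} to bound $I[w]$ below by a multiple of $\int_\Omega|\nabla w|^2$, substitute the expansion of $\mathcal S_{\eps V}[PU_{x,\lambda}]$ from Theorem \ref{thm expansion PU}, use \eqref{appr-min} to rewrite the left side as $(1+o(1))(S_N-S(\eps V))$, and absorb the cross term via Young's inequality together with the bound $\int_\Omega|V|\,PU_{x,\lambda}^2=\mathcal O(\lambda^{-2})$ (resp.\ $\mathcal O(\lambda^{-2}\log\lambda)$). The only small oversight is that you did not list the $o\!\left(\int_\Omega|\nabla w|^2\right)$ remainder from \eqref{exp-quot-ueps} among the terms in $R_\eps$ that must be absorbed into the coercivity budget — so you should start with a slightly larger multiple of $\int_\Omega|\nabla w|^2$ (the paper uses $4c$ rather than $2c$) — but this is purely a matter of constant bookkeeping and does not affect the argument.
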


\begin{proof}
Firstly, it follows directly from \eqref{eq-rey} and the definition of $I[w]$ in \eqref{definition I} that there is a $c >0$ such that for all $\epsilon> 0$ small enough, we have 
\begin{equation}
\label{coercivity of I} I[w] \geq 4c \int_\Omega |\nabla w|^2 \, dy \,. 
\end{equation}  
Using Proposition \ref{prop-lowerbd} and \eqref{coercivity of I} it follows that for $\eps$ small enough one has
\begin{align*}
\mathcal S_{\eps V} [u_\epsilon] \geq \mathcal S_{\eps V} [PU_{x, \lambda}] + 2c \int_\Omega |\nabla w|^2 \, dy +  \mathcal{O}\left(\eps\,  \sqrt{\int_\Omega |\nabla w|^2\, dy }\ \sqrt{ \int_\Omega |V|\, P U_{x, \lambda}^2\, dy  } \  \right) 
 +  o\left( (\lambda d)^{2-N}\right). 
\end{align*}
Since
$$
\eps\,  \sqrt{\int_\Omega |\nabla w|^2\, dy }\ \sqrt{ \int_\Omega |V| \, P U_{x, \lambda}^2\, dy } \ \leq \ c \int_\Omega |\nabla w|^2\, dy  + \frac{ \eps^2}{4c}\, \int_\Omega |V|\, P U_{x, \lambda}^2\, dy \, ,
$$
this further implies that for $\eps>0$ small enough 
\begin{align*}
\mathcal S_{\eps V} [u_\epsilon] & \geq \mathcal S_{\eps V} [PU_{x, \lambda}] + c \int_\Omega |\nabla w|^2 \, dy 
+  \mathcal{O}\left(\eps^2 \int_\Omega |V|\, P U_{x, \lambda}^2 \, dy   \right)   +  o\left( (\lambda d)^{2-N}\right).
\end{align*}
Using \eqref{exp-epsVPU} for the potential term and recalling \eqref{lim-eps}, we obtain 
\begin{align*}
\mathcal S_{\eps V} [u_\epsilon] \geq 
\begin{cases} 
 \mathcal S_{\eps V} [PU_{x, \lambda}] + c \int_\Omega |\nabla w|^2 \, dy   + 
o (\eps \lambda^{-2}) + o((\lambda d)^{2-N}), & N \geq 5, \\
 \mathcal S_{\eps V} [PU_{x, \lambda}] + c \int_\Omega |\nabla w|^2 \, dy   + 
o (\eps \lambda^{-2} \log \lambda) + o((\lambda d)^{-2}), & N = 4. 
\end{cases}
\end{align*}

Now the fact that $S_N - \mathcal S_{\eps V} [u_\epsilon] = (1 + o(1)) (S_N - S(\epsilon V))$ by \eqref{appr-min}, together with the expansion of $\mathcal S_{\eps V} [PU_{x, \lambda}]$ from Theorem \ref{thm expansion PU}, implies the claimed bounds \eqref{lowerbd-cor} and \eqref{lowerbd-cor-N4}. 
\end{proof}

In the next lemma, we prove that the limit point $x_0$ lies in the set $\mathcal N(V)$. 

\begin{lem}
\label{lemma bdry conc}
We have $x_0 \in \mathcal N(V)$. In particular, $d^{-1} = \mathcal O(1)$ as $\eps \to 0$ and $x \in \mathcal N(V)$ for $\eps$ small enough. 
\end{lem}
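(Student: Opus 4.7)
My plan is to combine the upper bound of Corollary \ref{cor-upperb} with the lower bound of Corollary \ref{cor-lowerbd} into a single pivotal inequality, and then rule out each bad configuration for $x_0$ in turn. Inserting $S_N - S(\eps V) \gtrsim \eps^{(N-2)/(N-4)}$ into \eqref{lowerbd-cor} and discarding the nonnegative $c\int|\nabla w|^2$ contribution, I obtain, for $N \geq 5$,
\[
K \left( \frac{N(N-2)\, a_N\, \phi(x)}{\lambda^{N-2}} + b_N\, \eps\, \frac{V(x)}{\lambda^2} \right) + c'\, \eps^{\frac{N-2}{N-4}} \ \leq \ o\!\left((\lambda d)^{2-N}\right) + o\!\left(\eps\, \lambda^{-2}\right),
\]
where $K := \bigl(S_N/(N(N-2))\bigr)^{2/(2-q)} > 0$. (The analogous inequality for $N = 4$ follows from \eqref{lowerbd-cor-N4} and \eqref{eq-upperb2}, with $\eps^{(N-2)/(N-4)}$ replaced by the exponentially small right-hand side of \eqref{eq-upperb2} and $\eps\, \lambda^{-2}$ by $\eps\, \log\lambda\, \lambda^{-2}$.) Three configurations need to be excluded: (i) $x_0 \in \Omega$ with $V(x_0) > 0$; (ii) $x_0 \in \partial\Omega$; (iii) $x_0 \in \Omega$ with $V(x_0) = 0$.

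Case (i) is immediate: when $V(x_0) > 0$ and $x_0 \in \Omega$, $d$ is bounded below and $V(x) \geq V(x_0)/2$ for small $\eps$, so the left-hand side contains a strictly positive contribution of order $\eps\, \lambda^{-2}$, which cannot be accommodated by the $o(\eps\,\lambda^{-2})$ error. For case (ii), the bound $\phi(x) \gtrsim d^{2-N}$ from \eqref{phi near bdry} turns the Robin contribution into a positive term $\gtrsim (\lambda d)^{2-N}$, which likewise cannot be absorbed into $o((\lambda d)^{2-N})$. Bounding the potential crudely by $\|V\|_{L^\infty(\Omega)}\, \eps/\lambda^2$ and moving it to the right, I arrive at
\[
c_1\, (\lambda d)^{2-N} + c_2\, \eps^{\frac{N-2}{N-4}} \ \lesssim \ \eps\, \lambda^{-2}.
\]
The second summand forces $\lambda = O(\eps^{-1/(N-4)})$, hence $\eps\, \lambda^{N-4} = O(1)$; the first summand then gives $d^{N-2} \gtrsim (\eps\, \lambda^{N-4})^{-1} \gtrsim 1$, contradicting $d \to 0$.

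Case (iii) combines features of the previous two: $\phi(x) \to \phi(x_0) \in (0,\infty)$ and the potential is $o(\eps\, \lambda^{-2})$, hence absorbed on the right; the reduced inequality then simultaneously forces $\eps\, \lambda^{N-4} \to \infty$ (from the Robin contribution $\gtrsim \lambda^{2-N}$) and $\eps\, \lambda^{N-4} \to 0$ (from $c'\, \eps^{(N-2)/(N-4)}$ against the $o(\eps\, \lambda^{-2})$ error) -- a contradiction. The main obstacle lies in the fact that at the natural blow-up rate $\lambda^{N-4} \sim 1/\eps$ the two error scales $o((\lambda d)^{2-N})$ and $o(\eps\, \lambda^{-2})$ both coincide with $\eps^{(N-2)/(N-4)}$, so one has to identify the correct definite-sign positive contribution (potential in (i), Robin near $\partial\Omega$ in (ii), energy gap in (iii)) to force a sign-mismatch. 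Once $x_0 \in \mathcal N(V)$ is established, the conclusions $d^{-1} = O(1)$ and $x_\eps \in \mathcal N(V)$ for small $\eps$ follow from the openness of $\mathcal N(V)$ and $x_\eps \to x_0$.
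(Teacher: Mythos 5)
Your argument is correct and relies on the same essential ingredients as the paper's proof: the lower bound $S_N-S(\eps V)\gtrsim\eps^{\frac{N-2}{N-4}}$ (resp.\ the exponential lower bound for $N=4$) from Corollary~\ref{cor-upperb}, the expansion from Corollary~\ref{cor-lowerbd}, and the coercivity $\phi(x)\gtrsim d^{2-N}$ from \eqref{phi near bdry}. What differs is the organization. The paper packages the two remainder scales as $A(d\lambda)^{2-N}-B\eps(d\lambda)^{-2}$, optimizes over the single variable $d\lambda$ to get the sharp lower bound $-cA^{-\frac{2}{N-4}}B^{\frac{N-2}{N-4}}\eps^{\frac{N-2}{N-4}}$, and combines with the upper bound to conclude in one stroke that $B$ is bounded away from zero, which simultaneously yields $d\not\to 0$ and $V(x_0)<0$. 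You instead carry out a three-way case analysis on the putative limit $x_0$, isolating in each case the positive-definite term (potential, Robin, energy gap) that creates a sign mismatch. The optimization route is more compact and, importantly, handles $N\geq 5$ and $N=4$ by visibly parallel computations; your approach makes the failure mechanism in each regime explicit, which is pedagogically clearer, at the cost of some repetition. One caveat: your $N=4$ argument is only gestured at (``with $\eps^{(N-2)/(N-4)}$ replaced by the exponential\dots''), but the algebra there is genuinely different — the natural variable is $\mu=\eps\log\lambda$ rather than $\eps\lambda^{N-4}$, and e.g.\ in case (ii) the constraint from the energy gap becomes $e^{(2\mu-\rho)/\eps}\lesssim\mu$, whose incompatibility with $\mu\to\infty$ requires a short separate computation rather than the naive substitution you suggest. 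It does go through, but it is not literally analogous and would need to be written out.
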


\begin{proof}
We first treat the case $N \geq 5$. In \eqref{lowerbd-cor}, we drop the non-negative gradient term and write the remaining lower order terms as 
\begin{align*}
 & \quad \left( \frac{S_N}{N(N-2)}\right)^{\frac{2}{2-q}} \left( \frac{N(N-2)\, a_N \, \phi(x)}{\lambda^{N-2}} +b_N\, \eps\, \frac{V(x)}{\lambda^{2}} \right) + o((\lambda d)^{2-N}) + o(\epsilon \lambda^{-2}) \\
 &= \left( \frac{S_N}{N(N-2)}\right)^{\frac{2}{2-q}} \left( A (d\lambda)^{2-N} - B \eps (d \lambda)^{-2} \right), 
\end{align*}
where 
\begin{equation}
\label{def AB lemma} A = N(N-2)\, a_N \, \phi(x) d^{N-2} + o(1), \qquad B = - b_N V(x_0) d^2 + o(1). 
\end{equation}
Notice that since $\phi(x) \gtrsim d^{2-N}$ by \eqref{phi near bdry}, the quantity $A$ is positive and bounded away from zero. Moreover, by \eqref{lowerbd-cor} and the fact that $S(\eps V) < S_N$, which follows from Corollary \ref{cor-upperb}, we must have $B > 0$.  Optimizing in $d\lambda$ yields the lower bound 
\begin{equation}
\label{lowerbd remainders}
 A (d\lambda)^{2-N} - B \eps (d \lambda)^{-2} \geq - c A^{-\frac{2}{N-4}} B^\frac{N-2}{N-4}  \epsilon^\frac{N-2}{N-4}, 
\end{equation}
for some explicit constant $c > 0$ independent of $\eps$. 
On the other hand, by Corollary \ref{cor-upperb}, there is $\rho > 0$ such that the leading term in \eqref{lowerbd-cor} is bounded by
\begin{equation}
\label{lowerbd leading}
(1 + o(1)) (S_N - S(\eps V)) \geq \rho\, \epsilon^\frac{N-2}{N-4}
\end{equation}
for all $\eps > 0$ small enough. Plugging \eqref{lowerbd remainders} and \eqref{lowerbd leading} into \eqref{lowerbd-cor} and rearranging terms, we thus deduce that 
\begin{equation}
\label{lowerbd B} 
B \geq \rho^\frac{N-4}{N-2} A^\frac{2}{N-2} c^{-\frac{N-4}{N-2}}. 
\end{equation}
As observed above, the quantity $A$ is bounded away from zero and therefore \eqref{lowerbd B} implies that $B$ is bounded away from zero. Hence, in view of \eqref{def AB lemma}, $d$ is bounded away from zero and $V(x_0)< 0$. The fact that $x \in \mathcal N(V)$ for $\eps$ small enough is a consequence of the continuity of $V$. This completes the proof in case $N \geq 5$.

Now we consider the case $N = 4$ in a similar way. In \eqref{lowerbd-cor-N4}, we drop the non-negative gradient term and write the remaining lower order terms as 
\begin{align}
& \quad \frac{8}{S_4} \left( \frac{8 a_4 \phi(x)}{\lambda^{2}} + b_4 V(x) \frac{ \epsilon \log \lambda }{\lambda^2} \right)  +  o((\lambda d)^{-2}) + o(\epsilon  \lambda^{-2} \log \lambda) \nonumber \\
& =  \frac{8}{S_4} \left( A (d\lambda)^{-2} - B \eps (d\lambda)^{-2} \log (d\lambda) \right) , \label{A-B n4}
\end{align}
where 
\begin{equation}
\label{def AB lemma N=4} 
A = 8 a_4 \phi(x) d^{2} + o(1), \qquad B = - b_4 (V(x_0)+o(1)) d^2 (1 - \frac{\log d}{\log d\lambda})  . 
\end{equation}
Since $\phi(x) \gtrsim d(x)^{-2}$ by \eqref{phi near bdry}, the quantity $A$ is positive and bounded away from zero. 

Moreover, by \eqref{lowerbd-cor-N4} and the fact that $S(\eps V) < S_4$, we must have $B > 0$.
Optimizing \eqref{A-B n4} in $d \lambda$ yields the lower bound 
\begin{equation}
\label{lowerbd remainders n4}
A (d\lambda)^{-2} - B \eps (d\lambda)^{-2} \log (d\lambda) \geq - \frac{B \epsilon}{2e } \exp \left( -\frac{2A}{B \epsilon} \right) = - \exp \left( -\frac{2A}{B \epsilon} + \log(\frac{B \epsilon}{2e}) \right) .  
\end{equation}
On the other hand, by Corollary \ref{cor-upperb}, there is $\rho > 0$ such that the leading term in \eqref{lowerbd-cor-N4} is bounded by
\begin{equation}
\label{lowerbd leading n4} 
(1 + o(1)) (S_4 - S(\eps V)) \geq  \exp(-\frac{\rho}{\epsilon}). 
\end{equation}
Plugging \eqref{lowerbd remainders n4} and \eqref{lowerbd leading n4} into \eqref{lowerbd-cor-N4}, we thus deduce that 
\[ 0 \geq  \exp(-\frac{\rho}{\epsilon}) - \exp \left( -\frac{2A}{B \epsilon} + \log(\frac{B \epsilon}{2e}) \right)\, , \]
which leads to 
\begin{equation}
\label{AB intermediate} -\frac{2A}{B} + \epsilon \log(\frac{B \epsilon}{2e}) \geq - \rho  . 
\end{equation}
Since $\phi(x) \gtrsim d^{-2}$ by \eqref{phi near bdry}, the quantity $A$ is bounded away from zero and moreover $B$ is bounded. Using this fact, the left hand side of \eqref{AB intermediate} can be written as
\[ -\frac{2A}{B} (1 - \frac{B \epsilon \log B}{2 A}) + \epsilon \log \frac{\epsilon}{2e} =  - \frac{2A}{B} \left(1 + o(1)\right) + o(1). \]
Together with \eqref{AB intermediate}, this easily implies, if $\epsilon > 0$ is small enough, that
\[ B \geq \frac{A}{\rho}. \]
As before, in view of \eqref{def AB lemma N=4}, we deduce that $d$ is bounded away from zero and that $V(x_0) < 0$. The fact that $x \in \mathcal N(V)$ for $\eps$ small enough is again a consequence of the continuity of $V$.
\end{proof}

\begin{proof}
[Proof of Theorem \ref{thm expansion}]
We first treat the case $N \geq 5$. In view of Lemma \ref{lemma bdry conc}, the lower bound \eqref{lowerbd-cor} can be written as (upon dropping the non-negative gradient term) 
\begin{align*}
0 & \geq (1 + o(1)) (S_N - S(\epsilon V)) +  \!\left( \frac{S_N}{N(N-2)}\right)^{\frac{2}{2-q}} \!\!\left( \!\frac{N(N-2)\, a_N \, (\phi(x_0)+ o(1))}{\lambda^{N-2}} +b_N\, \eps\, \frac{V(x_0)+o(1)}{\lambda^{2}} \! \right) \\
& \geq (1 + o(1)) (S_N - S(\epsilon V)) - C_N (\phi(x_0)+o(1))^{-\frac{2}{N-4}}\ |V(x_0) + o(1)|^{\frac{N-2}{N-4}} \eps^\frac{N-2}{N-4} 
\end{align*}
by optimization in $\lambda$. Therefore 
\[ S(\epsilon V) \geq S_N - C_N \phi(x_0)^{-\frac{2}{N-4}}\ |V(x_0)|^{\frac{N-2}{N-4}} \eps^\frac{N-2}{N-4} + o(\eps^\frac{N-2}{N-4}) \geq S_N - C_N \sigma_N(\Omega, V) \eps^\frac{N-2}{N-4} + o(\eps^\frac{N-2}{N-4}), \]
where the last inequality uses the fact that $x_0 \in \mathcal N(V)$ by Lemma \ref{lemma bdry conc}. 

Since the matching upper bound has already been proved in Theorem \ref{thm expansion PU}, the proof in case $N \geq 5$ is complete. 

Similarly, we can handle the case $N = 4$. In view of Lemma \ref{lemma bdry conc}, the lower bound \eqref{lowerbd-cor-N4} can be written as (upon dropping the non-negative gradient term) 
\begin{align*}
0 &\geq (1 + o(1)) (S_4 - S(\epsilon V)) + \frac{8}{S_4} \left( \frac{8 a_4 (\phi(x_0)+o(1))}{\lambda^{2}} + b_4 (V(x_0) + o(1)) \frac{ \epsilon \log \lambda }{\lambda^2} \right) \\
& \geq (1 + o(1)) (S_4 - S(\epsilon V)) - \frac{4 b_4}{e S_4} \epsilon |V(x_0) + o(1)| \exp \left(- \frac{4 (\phi(x_0) + o(1))}{\epsilon |V(x_0) + o(1)|} \right) 
\end{align*}
by optimization in $\lambda$. Therefore 
\[ S(\epsilon V) \geq S_4 -  \exp\left( - \frac 4\epsilon \left(1 +o(1)\right) \frac{\phi(x_0)}{|V(x_0)|}  \right) \geq  S_4 -  \exp\left( - \frac 4\epsilon \left(1 +o(1)\right) \sigma_4(\Omega,V)^{-1} \right)\, , \]
where the last inequality uses the fact that $x_0 \in \mathcal N(V)$ by Lemma \ref{lemma bdry conc}. 

Since the matching upper bound has already been proved in Theorem \ref{thm expansion PU}, the proof in case $N = 4$ is complete. 
\end{proof}

\begin{proof}
[Proof of Theorem \ref{thm-minimizers}]
We start again with the bounds from Corollary \ref{cor-lowerbd}, but this time we need to take into account the various nonnegative remainder terms more carefully. 

\emph{Proof for $N \geq 5$.    }  We rewrite \eqref{lowerbd-cor}, using Lemma \ref{lemma bdry conc}, as 
\begin{equation}
\label{apprminproof start} 
0  \geq (1 + o(1)) (S_N - S(\epsilon V)) - C_N (\phi(x_0)+o(1))^{-\frac{2}{N-4}}\ |V(x_0)+o(1)|^{\frac{N-2}{N-4}} \eps^\frac{N-2}{N-4} + \mathcal R
\end{equation}
with 
\[\mathcal R = \left(  \frac{A_\eps}{\lambda^{N-2}} - B_\eps \frac{\epsilon}{\lambda^2} + C_N A_\eps^{-\frac{2}{N-4}} B_\eps^\frac{N-2}{N-4} \epsilon^\frac{N-2}{N-4}  \right)   + c \int_\Omega |\nabla w|^2 \, dy \, ,  \]
where we have set 
\[ A_\eps =  \left( \frac{S_N}{N(N-2)}\right)^{\frac{2}{2-q}} \left( N(N-2)\, a_N \, (\phi(x_0)+ o(1)) \right) , \quad B_\eps =  \left( \frac{S_N}{N(N-2)}\right)^{\frac{2}{2-q}}   b_N\, (  V(x_0)+o(1) )  \, . \]
Notice that both summands of $\mathcal R$ are separately nonnegative. 
Inserting the upper bound from Corollary \ref{cor-upperb} into \eqref{apprminproof start}, we get 
\[ 0 \geq C_N \left(\sigma_N(\Omega, V) - \phi(x_0)^{-\frac{2}{N-4}}\ |V(x_0)|^{\frac{N-2}{N-4}} \right)  \epsilon^\frac{N-2}{N-4} + \mathcal R +  o(\epsilon^\frac{N-2}{N-4})\, . \]
Since each one of the first two summands on the right hand side is nonnegative, we deduce that 
\[ \phi(x_0)^{-\frac{2}{N-4}}\ |V(x_0)|^{\frac{N-2}{N-4}} = \sup_{x \in \mathcal N(V)} \phi(x)^{-\frac{2}{N-4}}\ |V(x)|^{\frac{N-2}{N-4}} = \sigma_N(\Omega, V)\]
and
\begin{equation}
\label{R is small} 
 \mathcal R = o(\epsilon^\frac{N-2}{N-4}). 
\end{equation}
In particular, \eqref{R is small} implies that 
\begin{equation}
\label{apprminproof w} \|\nabla w\|_2^2 = o(\epsilon^\frac{N-2}{N-4}). 
\end{equation}
Denote by 
\[ \lambda_0(\eps) = \left(\frac{(N-2)A_\eps}{2B_\eps } \right)^\frac{1}{N-4}  \eps^{\frac{1}{4-N}} \]
the unique value of $\lambda$ for which the first summand of $\mathcal R$ vanishes.
Using Lemma \ref{lem-taylor}, the bound \eqref{R is small} implies that 
\[  \eps (\lambda^{-1} - \lambda_0(\eps)^{-1})^2 = o(\epsilon^\frac{N-2}{N-4}),  \]
which is equivalent to 
\begin{equation}
\label{apprminproof lambda} 
\lambda = \lambda_0(\eps) + o(\eps^{-\frac{1}{N-4}}) = \left(\frac{N\, (N-2)^2\, a_N \, \phi(x_0)}{2 \,b_N\,  |V(x_0)|}\right)^{\frac{1}{N-4}}\, \eps^{-\frac{1}{N-4}} + o(\eps^{-\frac{1}{N-4}}) . 
\end{equation} 
Finally, to obtain the asymptotics of $\alpha$, by \eqref{str-2}, \eqref{appr-min}, \eqref{exp-PUq} and \eqref{apprminproof w}, we have that 
\begin{equation}
\label{apprminproof alpha start} |\alpha|^{-q} \left( \frac{S_N}{N(N-2)}\right)^{\frac{q}{q-2}} = \left( \frac{S_N}{N(N-2)}\right)^{\frac{q}{q-2}} - q a_N \lambda^{2-N} \phi(x_0) +\frac{q(q-1)}{2}\,  \int_\Omega U_{x, \lambda}^{q-2}\, w^2 \, dy +  o(\lambda^{2-N})\, .
\end{equation}
Moreover, by Hölder and Sobolev inequalities, 
\begin{equation}
\label{apprminproof alpha bound w}
\int_\Omega U_{x, \lambda}^{q-2} w^2 \, dy  \lesssim \|\nabla w\|^2. 
\end{equation}
We easily conclude from \eqref{apprminproof w}--\eqref{apprminproof alpha bound w}  that 
\[ |\alpha| = 1 + D_N \sigma_N(\Omega, V) \eps^\frac{N-2}{N-4} + o(\eps^\frac{N-2}{N-4}) \]
with $D_N$ given in \eqref{dn}. This completes the proof of Theorem \ref{thm-minimizers} in the case $N \geq 5$.

\emph{Proof for $N = 4$.    }
We rewrite \eqref{lowerbd-cor-N4}, using Lemma \ref{lemma bdry conc}, as
\begin{equation}
\label{apprminproof start n4} 
0  \geq (1 + o(1)) (S_4 - S(\epsilon V))
- \frac{B_\eps \epsilon}{2 e} \exp\left(- \frac{2A_\eps}{B_\eps \epsilon} \right)
+  \mathcal R
\end{equation}
with
$$
\mathcal R = \left( \frac{A_\eps}{\lambda^{2}} - B_\eps \frac{\epsilon \log \lambda}{\lambda^2} + \frac{B_\eps \epsilon}{2 e} \exp\left(- \frac{2A_\eps}{B_\eps \epsilon} \right) \right)
+ c \int_\Omega |\nabla w|^2 \, dy,  
$$
where we have set 
\[ A_\eps = \frac{64}{S_4}  a_4 (\phi(x_0) +o(1)), \qquad B_\eps = \frac{8}{S_4} b_4 |V(x_0)+o(1)| \, . \]
Notice that both summands of $\mathcal R$ are separately nonnegative. Inserting the upper bound from Corollary \ref{cor-upperb} into \eqref{apprminproof start n4}, we get
\begin{equation}
\label{eq:proofn4}
0 \geq (1 + o(1))\exp\left( - \frac 4\epsilon \left(1 +o(1)\right) \sigma_4(\Omega,V)^{-1} \right) - \frac{B_\eps \epsilon}{2 e} \exp\left(- \frac{2A_\eps}{B_\eps \epsilon} \right)
+  \mathcal R \,.
\end{equation}
Dropping the nonnegative term $\mathcal R$ from the right side and taking the logarithm of the resulting inequality, we obtain
$$
- \frac{2A_\eps}{B_\eps \epsilon} + \log\frac{B_\eps \epsilon}{2 e} \geq - \frac 4\epsilon \left(1 +o(1)\right) \sigma_4(\Omega,V)^{-1} + \log(1+o(1)) \,.
$$
Multiplying by $\epsilon$ and passing to the limit we infer, since $a_4/b_4 = 1/4$,
$$
- \frac{\phi(x_0)}{|V(x_0)|} \geq - \sigma_4(\Omega,V)^{-1} \,.
$$
By definition of $\sigma_4(\Omega,V)$, this implies
\begin{equation}
\label{eq:proofn41}
\frac{|V(x_0)|}{\phi(x_0)} = \sigma_4(\Omega,V) \,,
\end{equation}
as claimed.

With this information at hand, we return to \eqref{eq:proofn4} and drop the nonnegative first term on the right side to infer that
$$
\mathcal R \leq \frac{B_\eps \epsilon}{2 e} \exp\left(- \frac{2A_\eps}{B_\eps \epsilon} \right).
$$
Keeping only the second term in the definition of $\mathcal R$ and using \eqref{eq:proofn41} we deduce, in particular, that
\begin{equation}
\label{w is small n4}
 \|\nabla w \|_2^2 \leq \exp\left( - \frac 4\epsilon \left(1 +o(1)\right) \sigma_4(\Omega,V)^{-1} \right). 
 \end{equation}

We now keep only the first term in the definition of $\mathcal R$ and obtain from \eqref{eq:proofn4}, multiplied by $(2e/(B_\epsilon \epsilon))\exp(2A_\epsilon/(B_\epsilon \epsilon))$,
\begin{align*}
1 - (1+o(1)) \frac{2 e}{B_\eps \epsilon} \exp\left( \frac{2A_\eps}{B_\eps \epsilon} - \frac 4\epsilon \left(1 +o(1)\right) \sigma_4(\Omega,V)^{-1} \right)
& \geq \frac{2 e}{B_\eps \epsilon} \exp\left(\frac{2A_\eps}{B_\eps \epsilon} \right)
\mathcal R \\
& \geq \frac{2 e}{B_\eps \epsilon} \exp\left(\frac{2A_\eps}{B_\eps \epsilon} \right) \left( \frac{A_\epsilon}{\lambda^2} - B_\epsilon \frac{\epsilon \log\lambda}{\lambda^2} \right) + 1 \\
& = 1+ y\, e^{y+1}
\end{align*}
with $y = \frac{2}{B_\epsilon \epsilon} (A_\epsilon - \eps B_\epsilon \log \lambda)$.  In view of \eqref{eq:proofn41} and \eqref{eq-revised} we have
$$
(1+o(1)) \frac{2 e}{B_\eps \epsilon} \exp\left( \frac{2A_\eps}{B_\eps \epsilon} - \frac 4\epsilon \left(1 +o(1)\right) \sigma_4(\Omega,V)^{-1} \right) = \exp \left( o \left( \frac{1}{\epsilon} \right)\right),
$$
and therefore
$$
- \exp \left( o \left( \frac{1}{\epsilon} \right)\right) \geq y \,e^{y+1} \,.
$$
This implies
$$
0 < -y \leq o \left( \frac{1}{\epsilon} \right),
$$
which is the same as
$$
\frac{A_\epsilon}{B_\epsilon \epsilon} < \log \lambda \leq \frac{A_\epsilon}{B_\epsilon \epsilon} + o \left( \frac{1}{\epsilon} \right).
$$
Recalling \eqref{eq:proofn41} we obtain
\begin{equation}
\label{lambda asymptotics proof}
\lambda = \exp\left( - \frac 2\epsilon \left(1 +o(1)\right) \sigma_4(\Omega,V)^{-1} \right),
\end{equation}
as claimed.

Finally, to obtain the asymptotics of $\alpha$, we deduce from \eqref{apprminproof alpha start} and \eqref{apprminproof alpha bound w}, together with the bounds \eqref{w is small n4} and \eqref{lambda asymptotics proof}, that 
\[ |\alpha| = 1 +  \exp\left( -  \frac 4\epsilon \left(1 +o(1)\right) \sigma_4(\Omega,V)^{-1} \right). \]
This completes the proof of Theorem \ref{thm-minimizers} in the case $N = 4$. 
\end{proof}

\appendix

\section{Auxiliary results}
\label{sec-app}
The proof of the following lemma is similar to the computation in \cite[Appendix A]{rey2}. We provide here details for the sake of completeness. 
\begin{lem} \label{lem-tech}
Let $x = x_\lambda$ be a sequence of points in $\Omega$ such that $d(x) \lambda \to\infty$. Then
\begin{equation} 
\label{eq-a} 
\left( \int_\Omega U_{x,\lambda}^{\frac{q(q-2)}{q-1}}\, \varphi_{x,\lambda}^{\frac{q}{q-1}} \, dy  \right)^{\frac{q-1}{q}}  \  = 
\begin{cases} 
\mathcal{O}\left((d(x)\, \lambda)^{\frac{-2-N}{2}}\right)  & \text{ if } N > 6,  \\
\mathcal{O}\left((d(x)\, \lambda)^{-4} \log(d(x)\lambda)\right)  & \text{ if } N = 6, \\
 \mathcal{O}\left((d(x)\, \lambda)^{2-N}\right)  & \text{ if } N =4,5 
\end{cases} 
\end{equation}
and
\begin{equation}
\label{eq-b} 
 \int_\Omega U_{x,\lambda}^{q-2}\, \varphi_{x,\lambda}^2 \, dy  \  =
\mathcal{O}\left((d(x)\, \lambda)^{-N}\right) \, .
\end{equation}
\end{lem}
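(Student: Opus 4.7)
\emph{Approach.} The proof rests on two complementary pointwise bounds for $\varphi_{x,\lambda}$ already recorded in the paper: the uniform bound
$$\|\varphi_{x,\lambda}\|_{L^\infty(\Omega)} \lesssim \lambda^{-(N-2)/2}\, d(x)^{2-N},$$
obtained by combining \eqref{u-split}, \eqref{sup-f} and \eqref{sup-h}, and the pointwise domination $0 \leq \varphi_{x,\lambda} \leq U_{x,\lambda}$ from \eqref{eq-rey-2}. The first bound is tight when $|y-x|$ is of order $1/\lambda$ (where $U_{x,\lambda}$ is large); the second is tight when $|y-x|$ is of order $d(x)$ or larger (where $U_{x,\lambda}$ has decayed to the $L^\infty$ level of $\varphi_{x,\lambda}$). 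The natural cross-over scale is $|y-x|\sim d$, so the plan is to split
$$\Omega = \bigl(B_d(x) \cap \Omega\bigr) \cup \bigl(\Omega \setminus B_d(x)\bigr), \qquad d = d(x),$$
and estimate each piece with the sharper of the two bounds (recall $B_d(x) \subset \Omega$).

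\emph{For \eqref{eq-b}.} On the near piece, bound $\varphi_{x,\lambda}^2$ by its $L^\infty$ value, obtaining
$$\int_{B_d(x)} U_{x,\lambda}^{q-2}\varphi_{x,\lambda}^2\,dy \lesssim \lambda^{-(N-2)} d^{2(2-N)} \int_{B_d(x)} U_{x,\lambda}^{q-2}\,dy = \lambda^{2(2-N)}d^{2(2-N)}\int_{B_{d\lambda}(0)}\tfrac{dz}{(1+|z|^2)^2}$$
after the rescaling $z=\lambda(y-x)$. For $N\geq 5$ the right-hand integrand is integrable on $\R^N$, yielding $\mathcal O((d\lambda)^{2(2-N)})$. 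On the far piece, use $\varphi_{x,\lambda}\leq U_{x,\lambda}$ and the identity $(q-2)+2=q$:
$$\int_{\Omega\setminus B_d(x)} U_{x,\lambda}^{q-2}\varphi_{x,\lambda}^2\,dy \leq \int_{\R^N\setminus B_d(x)} U_{x,\lambda}^q\,dy = \int_{\R^N\setminus B_{d\lambda}(0)}\tfrac{dz}{(1+|z|^2)^N} = \mathcal O((d\lambda)^{-N}).$$
Since $4-2N\leq -N$ for $N\geq 4$, the far contribution dominates, giving \eqref{eq-b} for $N\geq 5$. In the borderline case $N=4$ the near piece picks up an extra factor $\log(d\lambda)$ from the rescaled integral, which is nonetheless absorbed by the $o$-estimate invoked in Section \ref{sec lowerbd exp}.

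\emph{For \eqref{eq-a}.} Write $\alpha := q(q-2)/(q-1) = 8N/(N^2-4)$ and $\beta := q/(q-1) = 2N/(N+2)$, and note $\alpha+\beta=q$. Apply the same split. On the near piece,
$$\int_{B_d(x)}U_{x,\lambda}^\alpha \varphi_{x,\lambda}^\beta\,dy \leq \|\varphi_{x,\lambda}\|_\infty^\beta \int_{B_d(x)} U_{x,\lambda}^\alpha\,dy \lesssim (d\lambda)^{-2N(N-2)/(N+2)}\int_{B_{d\lambda}(0)}\tfrac{dz}{(1+|z|^2)^{4N/(N+2)}},$$
after rescaling and collecting the $\lambda$- and $d$-exponents. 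The three cases in \eqref{eq-a} correspond exactly to the three possible behaviours of this last integral as $d\lambda\to\infty$: convergent on $\R^N$ when $N<6$ (i.e.\ $N=4,5$), logarithmically divergent when $N=6$, and polynomially divergent of order $(d\lambda)^{N(N-6)/(N+2)}$ when $N>6$. On the far piece, $U_{x,\lambda}^\alpha \varphi_{x,\lambda}^\beta \leq U_{x,\lambda}^q$, so $\int_{\Omega\setminus B_d(x)}U^\alpha\varphi^\beta\,dy = \mathcal O((d\lambda)^{-N})$. Adding the near and far estimates and finally raising to the exponent $(q-1)/q=(N+2)/(2N)$ reproduces the three regimes stated in \eqref{eq-a}.

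\emph{Main obstacle.} Conceptually the proof is a single near/far split; the only real work is the exponent bookkeeping in \eqref{eq-a} induced by the irrational exponents $\alpha,\beta$, and tracking which of the three regimes of $\int_{B_{d\lambda}}(1+|z|^2)^{-4N/(N+2)}\,dz$ occurs depending on whether $4N/(N+2)$ is greater than, equal to, or less than $N/2$.
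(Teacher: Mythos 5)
Your decomposition of $\Omega$ into $B_d(x)$ and $\Omega\setminus B_d(x)$ at the scale $d=d(x)$ is exactly the paper's, and the near-piece computations and case analysis for \eqref{eq-a} match the paper's argument. The one genuine methodological difference is the far region: you exploit the pointwise bound $0\le\varphi_{x,\lambda}\le U_{x,\lambda}$ from \eqref{eq-rey-2} to replace $U^\alpha\varphi^\beta$ (resp.\ $U^{q-2}\varphi^2$) by $U^q$ on $\Omega\setminus B_d$, whereas the paper applies H\"older and then invokes \eqref{rey-pr1}, the $L^q$ bound on $\varphi_{x,\lambda}$ from Rey's Proposition 1(c). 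Both give $\mathcal O\bigl((d\lambda)^{-N}\bigr)$ for the far integral; your route is marginally simpler since it avoids the $L^q$ estimate, though it yields the same orders.

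There is, however, a concrete computational slip in your proof of \eqref{eq-b}. You claim that for $N\geq 5$ the function $(1+|z|^2)^{-2}$ is integrable on $\R^N$, so that the near piece is $\mathcal O\bigl((d\lambda)^{2(2-N)}\bigr)$. This is false: $(1+|z|^2)^{-2}\sim|z|^{-4}$ at infinity and $\int_0^R t^{N-1}(1+t^2)^{-2}\,dt$ diverges like $R^{N-4}$ for $N\ge5$ (and like $\log R$ for $N=4$). The correct near-piece bound is therefore
\[
\int_{B_d(x)} U_{x,\lambda}^{q-2}\varphi_{x,\lambda}^2\,dy
\lesssim (d\lambda)^{2(2-N)}\cdot(d\lambda)^{N-4}=(d\lambda)^{-N}\,,\qquad N\geq 5,
\]
which coincides with the far piece rather than being dominated by it. Your final bound for \eqref{eq-b} survives, but the intermediate exponent $4-2N$ and the assertion that the far piece dominates are wrong and should be corrected. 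Your remark that for $N=4$ the near piece acquires an extra $\log(d\lambda)$ is correct — the paper's own equation \eqref{b-ball} actually produces the same $\log$ without acknowledging it — and, as you note, this is harmless because only the weaker estimate $o\bigl((d\lambda)^{2-N}\bigr)$ is needed downstream.
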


\begin{proof}
We write $d = d(x)$ for short in the following proof. 

\emph{Proof of \eqref{eq-a}.    }
By equations \eqref{u-split}, \eqref{sup-f} and \eqref{sup-h},
\begin{equation} \label{holder-in}
\int_{B_d(x)} U_{x,\lambda}^{\frac{q(q-2)}{q-1}}\, \varphi_{x,\lambda}^{\frac{q}{q-1}}\, dy \, \leq \|\varphi_{x,\lambda}\|^{\frac{q}{q-1}}_{L^\infty(\Omega)}\, \int_{B_d(x)} U_{x,\lambda}^{\frac{q(q-2)}{q-1}} \, dy  = \mathcal{O}\left( (d^{2-N}\, \lambda^{\frac{2-N}{2}})^{\frac{q}{q-1}}\right) \, \int_{B_d(x)} U_{x,\lambda}^{\frac{q(q-2)}{q-1}} \, dy \, .
\end{equation}
Moreover, since $\frac{q(q-2)}{q-1}\, \frac{N-2}{2} = \frac{4N}{N+2}$, from \eqref{u-function} we obtain
\begin{align}
\int_{B_d(x)} U_{x,\lambda}^{\frac{q(q-2)}{q-1}} \, dy  & = \mathcal{O}\left( \lambda^{\frac{4N}{N+2}}\right)\, \int_0^d \frac{r^{N-1}\, dr}{(1+\lambda^2\, r^2)^{\frac{4N}{N+2}}} = \mathcal{O}\left( \lambda^{\frac{2N-N^2}{N+2}}\right)\, \int_0^{\lambda d} \frac{t^{N-1}\, dr}{(1+ t^2)^{\frac{4N}{N+2}}} \nonumber \\
& = \mathcal{O}\left( \lambda^{\frac{2N-N^2}{N+2}}\right)\, \left(\int_1^{\lambda d} t^{\frac{N(N-6)}{N+2}} \, t^{-1} \, dt +\mathcal{O}(1)\right). \label{1-ball-in}
\end{align}
If $N>6$, then 
$$
\int_1^{\lambda d} t^{\frac{N(N-6)}{N+2}} \, t^{-1} \, dt = \mathcal{O}\left( ( d\, \lambda)^{\frac{N(N-6)}{N+2}} \right).
$$
If $N = 6$, then 
$$
\int_1^{\lambda d} t^{\frac{N(N-6)}{N+2}} \, t^{-1} \, dt = \mathcal{O}\left( \log ( d\, \lambda) \right)
$$
and if $N = 4,5$, then 
$$
\int_1^{\lambda d} t^{\frac{N(N-6)}{N+2}} \, t^{-1} \, dt = \mathcal{O}\left(1 \right)
$$
This gives the bound claimed in \eqref{eq-a} in each case, provided we can bound the integral on the complement $\Omega \setminus B_d(x)$. On this region, we have by H\"older 
\begin{align*}
\left( \int_{\Omega \setminus B_d(x)} U_{x,\lambda}^{\frac{q(q-2)}{q-1}}\, \varphi_{x,\lambda}^{\frac{q}{q-1}}\, dy  \right)^{\frac{q-1}{q}} &
\leq \, \left( \int_\Omega \varphi_{x,\lambda}^{\frac{2N}{N-2}} \, dy  \right)^{\frac{N-2}{2N}}\,  \left( \int_{\R^N\setminus B_ d(x)}  U_{x,\lambda}^{\frac{2N}{N-2}} \, dy  \right)^{\frac 2N} \\
& = \mathcal{O}\left((d\, \lambda)^{\frac{2-N}{2}} \right)\,   \left( \int_{\R^N\setminus B_ d(x)}  U_{x,\lambda}^{\frac{2N}{N-2}} \, dy  \right)^{\frac 2N} \\
& = \mathcal{O}\left((d\, \lambda)^{\frac{2-N}{2}} \right)\,  \left( \int_{ d \lambda}^\infty  \frac{dt}{t^{N+1}} \right)^{\frac 2N} \\
& =
\mathcal{O}\left((d\, \lambda)^{\frac{2-N}{2}}\right) \, \mathcal{O}\left(( d\, \lambda)^{-2}\right),
\end{align*}
where we have used \eqref{u-function} and the fact that 
\begin{equation} \label{rey-pr1}
\left( \int_\Omega \varphi_{x,\lambda}^{\frac{2N}{N-2}} \, dy  \right)^{\frac{N-2}{2N}}\, =  \mathcal{O}\left((d\, \lambda)^{\frac{2-N}{2}} \right)
\end{equation}
by \cite[Prop.~1(c)]{rey2}. Combining all the estimates, we deduce \eqref{eq-a}. 

\emph{Proof of \eqref{eq-b}.    } We split the domain of integration $\Omega$ again into $B_d(x)$ and $\Omega \setminus B_d(x)$. On $B_d(x)$, by \eqref{u-split},
\begin{align} 
&\qquad \int_{B_d(x)} U_{x,\lambda}^{q-2}\, \varphi_{x,\lambda}^2  \, dy  \leq  \|\varphi_{x,\lambda}\|^2_{L^\infty(\Omega)} \left( \int_{B_ d(x)} U_{x,\lambda}^{q-2} \, dy  \right) \nonumber \\
& = \ \mathcal{O}\left( d(x)^{4-2N}\, \lambda^{2-N}\right) \left( \lambda^{2-N}  \int_0^{ d \lambda} \frac{t^{N-1}\, dt}{(1+t^2)^2} \right) = \mathcal O ((d\lambda)^{-N}). \label{b-ball}
\end{align}
On $\Omega \setminus B_d(x)$, by Hölder and \eqref{rey-pr1}, 
\begin{align}
\label{b-out}
\int_{\Omega \setminus  B_d(x)} U_{x,\lambda}^{q-2}\, \varphi_{x,\lambda}^2  \, dy  \leq \left( \int_{\Omega} \varphi_{x,\lambda}^q \, dy  \right)^\frac{2}{q} \left( \int_{\R^N\setminus B_ d(x)}  U_{x,\lambda}^q \, dy  \right)^{\frac{q-2}{q}} = \mathcal{O}\left((d(x)\, \lambda)^{2-N}\right) \, \mathcal{O}\left(( d\, \lambda)^{-2} \right).
\end{align}
Combining \eqref{b-ball} and \eqref{b-out}, we obtain \eqref{eq-b}.
\end{proof}

\begin{lem}
\label{lem-taylor}
Let $f_\eps: (0, \infty) \to \R$ be given by
\[ f_\eps(\lambda) = \frac{A_\epsilon}{\lambda^{N-2}} - B_\epsilon \frac{\epsilon}{\lambda^2}\]
with $A_\eps, B_\eps > 0$ uniformly bounded away from 0 and $\infty$. Denote by 
\[ \lambda_0 = \lambda_0(\eps) = \left(\frac{(N-2)A_\eps}{2B_\eps } \right)^\frac{1}{N-4}  \eps^{\frac{1}{4-N}} \]
the unique global minimum of $f_\eps$. Then there is a $c_0 > 0$ such that for all $\eps > 0$ we have
\[ f_\eps(\lambda) - f_\eps(\lambda_0) \geq 
\begin{cases}
c_0 \epsilon \left(  \lambda^{-1} - \lambda_0(\eps)^{-1}\right)^2 & \text{ if } \quad (\frac{A_\epsilon}{B_\eps})^{\frac{1}{N-4}} \eps^{-\frac{1}{N-4}} \lambda^{-1} \leq 2 (\frac{2}{N-2})^\frac{1}{N-4} , \\
c_0 \epsilon^\frac{N-2}{N-4}  & \text{ if } \quad  (\frac{A_\epsilon}{B_\eps})^{\frac{1}{N-4}} \eps^{-\frac{1}{N-4}} \lambda^{-1} > 2 (\frac{2}{N-2})^\frac{1}{N-4}.
\end{cases} \]
\end{lem}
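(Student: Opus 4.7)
The plan is to reduce to the analysis of a single $\eps$-independent function of one variable via the scaling substitution $\mu = 1/\lambda$. Under this substitution $f_\eps(\lambda)$ becomes $F_\eps(\mu) := A_\eps \mu^{N-2} - B_\eps \eps \mu^2$, whose unique positive critical point $\mu_0 = 1/\lambda_0(\eps)$ is characterized by the identity $A_\eps \mu_0^{N-4} = 2 B_\eps \eps/(N-2)$. First I would use this identity to compute $F_\eps(\mu_0) = -\tfrac{N-4}{N-2} B_\eps \eps \mu_0^2$, and then introduce the rescaled variable $t := \mu/\mu_0$, obtaining
\[
f_\eps(\lambda) - f_\eps(\lambda_0) = B_\eps \eps \mu_0^2\, g(t),
\qquad
g(t) := \frac{2}{N-2}\, t^{N-2} - t^2 + \frac{N-4}{N-2}.
\]
A short computation gives $B_\eps \eps \mu_0^2 = B_\eps \bigl(\tfrac{2 B_\eps}{(N-2)A_\eps}\bigr)^{2/(N-4)} \eps^{(N-2)/(N-4)}$, so this prefactor is bounded above and below by positive constant multiples of $\eps^{(N-2)/(N-4)}$. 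All the $\eps$- and parameter-dependence has now been absorbed into this prefactor, and $g$ is a fixed smooth function.

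Next I would record elementary facts about $g$. A direct computation gives $g(1) = 0$, $g'(1) = 0$, and $g''(1) = 2(N-4) > 0$, while $g'(t) = 2t(t^{N-4} - 1)$ shows that $t = 1$ is the only critical point on $(0,\infty)$, that $g$ is strictly decreasing on $(0,1)$ and strictly increasing on $(1,\infty)$, and that $g > 0$ on $(0,\infty)\setminus\{1\}$ (using additionally $g(0^+) = (N-4)/(N-2) > 0$).

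Finally, I would observe that the stated hypothesis on $\lambda$ translates exactly to $t \leq 2$, producing the dichotomy of the lemma. For $t \in (0,2]$, I would show that $h(t) := g(t)/(t-1)^2$ is continuous on the compact interval $[0,2]$, by extending it at $t = 1$ via Taylor's formula (which gives $h(1) = g''(1)/2 = N-4$) and at $t = 0$ by its continuous value $(N-4)/(N-2)$, and that it is strictly positive everywhere on $[0,2]$; it therefore has a positive infimum $c > 0$, yielding $g(t) \geq c(t-1)^2$. Multiplying by $B_\eps \eps \mu_0^2$ and using $(t-1)\mu_0 = \mu - \mu_0 = \lambda^{-1} - \lambda_0^{-1}$ converts this into the first estimate. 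For $t > 2$, the monotonicity of $g$ on $(1,\infty)$ gives $g(t) \geq g(2) > 0$, which together with the prefactor asymptotics yields the second estimate. The only mildly technical step is the compactness argument promoting the local quadratic lower bound at $t = 1$ to a uniform one on $(0,2]$, and no serious obstacle arises.
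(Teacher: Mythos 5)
Your proposal is correct and takes essentially the same approach as the paper: both rescale $\lambda^{-1}$ by a factor depending on $A_\eps$, $B_\eps$, $\eps$ so that the difference $f_\eps(\lambda)-f_\eps(\lambda_0)$ becomes a uniform $\eps^{(N-2)/(N-4)}$ prefactor times a fixed one-variable function with a nondegenerate minimum (your $g(t)$ is the paper's $F(t_0 t)-F(t_0)$ up to the constant factor $t_0^2$), and then use the elementary quadratic/constant lower bound for that fixed function on the two regimes; your compactness argument for $h(t)=g(t)/(t-1)^2$ simply spells out what the paper leaves as "easy to see."
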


\begin{proof}
Let $F(t):= t^{N-2} - t^2$ and denote by $t_0 := (\frac{2}{N-2})^\frac{1}{N-4}$ the unique global minimum on $(0, \infty)$ of $F$. Then it is easy to see that there is $c > 0$ such that 
\[ F(t) - F(t_0) \geq 
\begin{cases}
c(t - t_0)^2 & \text{ if } \quad 0 < t  \leq 2 t_0, \\
c t_0^{N-2} & \text{ if } \quad t > 2 t_0.
\end{cases}
\]
The assertion of the lemma now follows by rescaling. Indeed, it suffices to observe that 
\[ f_\eps(\lambda) = A_\eps^{-\frac{2}{N-4}} B_\eps^\frac{N-2}{N-4} \eps^\frac{N-2}{N-4} F\left( (\frac{A_\epsilon}{B_\eps})^{\frac{1}{N-4}} \eps^{-\frac{1}{N-4}} \lambda^{-1} \right) \]
and to use the boundedness of $A_\eps$ and $B_\eps$. 
\end{proof}

\end{document}